\newcommand{\prob}{\mathbb{P}}
\newcommand{\Prob}[1]{\prob\left(#1\right)}
\newcommand{\C}{{\rm\bf C}}
\numberwithin{equation}{section}
\newtheorem{thm}{Theorem}
\newtheorem{lem}[thm]{Lemma}
\newtheorem{prop}[thm]{Proposition}
\theoremstyle{definition}
\newtheorem{remark}[thm]{Remark}
\numberwithin{thm}{section}
\newcommand{\be}{ \begin{equation}}
\newcommand{\ee}{\end{equation}}
\newcommand{\ben}{ \begin{equation*}}
\newcommand{\een}{\end{equation*}}
\newcommand{\nn}{\nonumber}
\newcommand{\oF}{\overline{F}}
\def\P{{\mathbb P}}
\def\Fb{{\overline{F}}}
\newcommand{\cutoff}{\gamma_n}
\newcommand{\pr}{\rightarrow}
\newcommand{\ba}{\begin{array}}
\newcommand{\ea}{\end{array}}
\newcommand{\eps}{\varepsilon}
\newcommand{\il}{\int\limits}
\newenvironment{inspring}[1]%
{\begin{list}{}{\setlength{\rightmargin}{0cm}
                \setlength{\listparindent}{0cm}
                \settowidth{\labelwidth}{\mbox{#1}}
                \setlength{\leftmargin}{1.1\labelwidth}
                \setlength{\labelsep}{.1\labelwidth}}}%
{\end{list}}
\newcommand{\ITEM}[1]{\item[#1\hfill]}
\newcommand{\bi}[1]{\begin{inspring}{#1}}
\newcommand{\ei}{\end{inspring}}
\newcommand{\dil}{\displaystyle \int\limits}
\newcommand{\bfc}{{\bf c}}
\newcommand{\bfd}{{\bf d}}
\newcommand{\bft}{{\bf t}}
\newcommand{\bfu}{{\bf u}}
\newcommand{\bfw}{{\bf w}}
\newcommand{\beq}{\begin{equation}}
\newcommand{\eq}{\end{equation}}
\font\tenmsa=msam10 \font\sevenmsa=msam7 \font\fivemsa=msam5
\font\tenmsb=msbm10 \font\sevenmsb=msbm7 \font\fivemsb=msbm5
\def\Bbb{\ifmmode\let\next\Bbb@\else
 \def\next{\errmessage{Use \string\Bbb\space only in math mode}}\fi\next}
\def\Bbb@#1{{\Bbb@@{#1}}}
\def\Bbb@@#1{\fam\msbfam#1}
\newcommand{\dR}{{\Bbb R}}
\newcommand{\e}{{\rm e}}
\begin{document}
%opening
\title{Counting cliques and cycles in scale-free \\ inhomogeneous random graphs}
\author[]{A.J.E.M. Janssen}
\author[]{Johan S.H. van Leeuwaarden}
\author[]{Seva Shneer}
\affil[]{Eindhoven University of Technology and Heriot-Watt University}
\renewcommand\Authands{ and }

\maketitle
 \abstract{Scale-free networks contain many small cliques and cycles. We
model such networks as inhomogeneous random graphs with regularly varying infinite-variance weights. For these models, the number of cliques and cycles have exact integral expressions amenable to asymptotic analysis. We obtain various asymptotic descriptions for how the average number of cliques and cycles, of any size, grow with the network size. For the cycle asymptotics we invoke the theory of circulant matrices. 
 }

\section {Introduction}\label{intro}

%Scale-free graphs have power-law degree distributions with a heavy tail and infinite second moment. The heavy tail creates hubs, vertices of extremely high degree, and comes with scale-free properties such as ultra-small distances~\cite{hofstad2007, newman2001} and ultra-fast information spreading \cite{janson2009b, pastor2001}. Another scale-free property is the abundance of certain smaller subgraphs (also called motifs or graphlets), which can again be linked to the extremely skewed degree distribution \cite{bianconi2005loops, bianconi2006emergence,hofstad2017d}. 

We study the number of cliques and cycles in scale-free random graphs with power-law degree distributions that have an infinite second moment. Such random graphs contain many small subgraphs \cite{bianconi2005loops, bianconi2006emergence,hofstad2017d}. 
Cliques are subsets of vertices that together form a complete graph and cycles are closed paths that visit each vertex only once. 

We employ the rank-1 inhomogeneous random graph or hidden-variable model \cite{boguna2003,park2004,bollobas2007,britton2006,norros2006,chung2002}, which generates power-law random graphs, to derive asymptotic expressions for the average number of cliques and cycles, of arbitrary size. This model constructs simple graphs with soft constraints on the degree sequence~\cite{chung2002,boguna2003}. The graph consists of $n$ vertices with weights $(h_1,h_2,\ldots,h_n)$. These weights are an i.i.d.\ sample from the power-law distribution
\begin{equation}\label{eq:pl}
\oF(h) = \Prob{H>h}=l(h)h^{1-\tau}, \quad  h\geq 1,
\end{equation}
for some slowly-varying function $l(h)$ and power-law exponent $\tau\in(2,3)$. 
We denote the average value of the weights by $\mu$. Then, every pair of vertices with weights $(h_i,h_j)$ is connected with probability 
\begin{equation}\label{eq:phh}
		p(h_i,h_j)=\min\left(\frac{h_i h_j}{\mu n},1\right),
\end{equation}
which is the Chung-Lu version of the rank-1 inhomogeneous random graph~\cite{chung2002}. This connection probability ensures that the degree of a vertex with weight $h$ will be close to $h$~\cite{boguna2003}, and that the probability $p(h_i,h_j)$ remains in the interval $[0,1]$. 

 %The inhomogeneous random graph
%equips 
 %We shall study cliques, cycles and related subgraphs in scale-free networks with infinite-variance power-law degrees. We hereto consider a widely applied random network model for generating power-law random graphs that are tractable for asymptotic analysis in the large-network limit.
%every vertex with an i.i.d.~weight $H$, which we choose to draw from a power-law distribution
%\begin{equation}\label{eq:pl}
%\Prob{H>x}=l(x)x^{1-\tau}, % \quad x\geq 0,
%\end{equation}
%An edge between vertices $i$ and $j$ is assumed to be present with probability 
% $p(H_i,H_j)$, some function of both weights. 
%Chung and Lu \cite{chung2002} introduced the inhomogeneous random graph in the form
%\begin{equation}\label{c1}
%p(H_i,H_j)\sim \frac{H_i H_j}{n \E(H)}
%\end{equation}
%so that the expected degree of a vertex equals its weight. Many real-world networks were shown to have power-law degree distributions as in \eqref{eq:pl} with $\tau\in(2,3)$\cite{albert1999,faloutsos1999,jeong2000,vazquez2002}.

An alternative way to guarantee $p(h_i,h_j)\in[0,1]$
is to assume that the support of the weight distribution is restricted to $[0,\sqrt{n \mu}]$, so that the product $h_ih_j$  never exceeds $n \mu$, making the minimum operator superfluous. 
Banning degrees larger than the $\sqrt{n \mu}$ (also called the structural cutoff), however, violates the reality of scale-free networks in which hubs of expected degree $(n\mu)^{1/(\tau-1)}\gg \sqrt{n \mu} $ occur.  We therefore choose to work with \eqref{eq:phh} and \eqref{eq:pl},
putting no further restrictions on the range of the weights (and hence degrees).
This creates degree correlations, also observed in real scale-free networks, and 
an average connectivity and clustering coefficient that depend on the vertex weight/degree \cite{stegehuis2017,stegehuis2017b}	.

The goal is then to obtain sharp asymptotic estimates for 
the average number of $k$-cliques $A_k(n)$, which can be expressed as 
\begin{equation} \label{v1:1}
A_k(n):={n \choose k}\P(K_k) % \sim n^k \P(K_k)
\end{equation}
with $\P(K_k)$ the probability that $k$ arbitrary vertices together form a $k$-clique. Similarly, the average number of $k$-cycles $C_k(n)$ satisfies
\be \label{defcycle}
C_k(n) := \dfrac{k!}{2k}{n \choose k} \P(C_k) 
\ee
with $\P(C_k)$ the probability that $k$ arbitrary vertices together form a $k$-cycle.
The combinatorial factor $\frac{k!}{2k}{n\choose k}$ is built from the usual factor ${n\choose k}$, due to choosing the set of $k$ out of $n$ vertices, and the factor $k!$, being the number of permutations of the chosen set. This has to be divided by ${k \choose 1}$, accounting for choosing a starting point of the cycle, and by 2, accounting for choosing the immaterial cycle's orientation.
%The binomial coefficient increases with $n$ as $n^k$, so the remaining 
%challenge is to obtain estimates for  $\P(K_k)$ in the large-network limit $n\to\infty$. 
\vspace{.2cm}

\noindent{\bf Imposing a cutoff}.
Bianconi and Marsili \cite{bianconi2005loops,bianconi2006emergence} start from an  exact integral for $\P(K_k)$, and impose the cutoff $\sqrt{n\mu}$, so that the integral can be transformed into a form amenable to asymptotic analysis through the saddle point method. We now repeat some of their arguments, and provide an alternative derivation that does not rely on the saddle point method. With the cutoff $\sqrt{n \mu}$ the probability $\P(K_k)$ can be expressed in terms of the $k$-fold integral
\begin{equation}\label{v1}
%{n \choose k} \int_{1}^{h_c}\cdots \int_{1}^{h_c} \rho(h_1)\cdots \rho(h_k) \prod_{i,j, i\neq j} p(h_i,h_j) {\rm d} h_1\ldots  {\rm d} h_k
\P(K_k)=\int_{1}^{\sqrt{n \mu}}\cdots \int_{1}^{\sqrt{n \mu}}  \prod_{i,j, i< j} p(h_i,h_j)  dF(h_1) \ldots dF(h_k). \end{equation}
%Assume $\rho(h)=Ch^{-\tau}$ with $\tau\in(2,3)$ and mean $\mean{h}$.  
%\begin{equation}\label{}
%p(h,h')= \frac{h h'}{n \mean{h}}
%\end{equation}
%with $\rho(h)$ 
Observe that since the support of $H$ is restricted to $[1,\sqrt{n \mu}]$, the product in \eqref{v1} can be brought into the form
\begin{equation}\label{decomp}
 \prod_{i,j, i< j} p(h_i,h_j)=\prod_{i=1}^k \left(\frac{h_i}{\sqrt{n \mu}}\right)^{k-1},
 \end{equation}
 and hence  \eqref{v1:1} grows as
\begin{align}
%&& \int_{1}^{\sqrt{n}} \ldots \int_{1}^{\sqrt{n}} \prod_{i\neq j} \P\left(V_{ij} > \frac{n}{h_i h_j}\right) dF(h_1) \ldots dF(h_k) 
%\\ && n^k n^{-k(k-1)/2} \int_{1}^{\sqrt{n}} \ldots \int_{1}^{\sqrt{n}} \prod_{i\neq j} (h_i h_j) dF(h_1) \ldots dF(h_k) \nonumber
A_k(n)&\sim n^k n^{-k(k-1)/2}\left(\int_{1}^{\sqrt{n \mu}} h_1^{k-1} dF(h_1)\right)^k\label{firsth}\nonumber\\
%& \sim n^kn^{-k(k-1)/2} \left(\frac{\tau-1}{k-\tau}(n \mu)^{1/2(-\tau+k)} %l(\sqrt{n \mu})\right)^k \nonumber\\
%& \sim  n^k\left(\frac{\tau-1}{k-\tau}\right)^k \left((\sqrt{n})^{-\tau+1} l(\sqrt{n})\right)^k 
& \sim n^k n^{-k(k-1)/2} \mu^{-k(k-1)/2} \left(\frac{\tau-1}{k-\tau}(\sqrt{n \mu})^{k-1}\oF(\sqrt{n \mu})\right)^k \nonumber\\
& \sim \left(\frac{\tau-1}{k-\tau}\right)^k \mu^{\frac{k}{2}(1-\tau)} n^{\frac{k}{2}(3-\tau)}l^k(\sqrt{n}) \quad {\rm as} \quad n\to\infty,
\end{align}
where by $g_1(n) \sim g_2(n)$ here and throughout we will understand $g_1(n)/g_2(n) \to 1$ as $n \to \infty$. To evaluate the integral in \eqref{firsth} we have invoked Lemma~\ref{lemma:BGT1} in Section~\ref{sec:leading}, which is a simple corollary of \cite[Proposition 1.5.8]{Bingham1989}. We have also used the defining property of a slowly varying function, namely $l(ch) \sim l(h)$ as $h \to \infty$ for any fixed $0 < c < \infty$.

In the remainder of this paper we will not impose a cutoff, so that the random graph has degrees with a truly heavy-tailed distribution. The straightforward reasoning above is then obstructed by the $\min$-operator in the connection probability  \eqref{eq:phh}. Indeed, the product of all connection probabilities can no longer be decomposed as in \eqref{decomp}. %Another challenge is to deal with the power-law cumulative distribution functions and the slowly-varying functions that enter the integral expression because of the unrestricted heavy-tailed distribution \eqref{eq:pl}. 

\vspace{.2cm}

\noindent{\bf Main contributions}.
To deal with the product of all connection probabilities we introduce a specific way of conditioning on the vertex weights. For each of the $k$ vertices that participates in the $k$-clique we condition on whether its weight is smaller or larger than $\sqrt{n \mu}$. This in total yields $k+1$ different configurations: all weights smaller than $\sqrt{n \mu}$, all weights larger than $\sqrt{n \mu}$, and one up to $k-1$ weights smaller than $\sqrt{n \mu}$. The first two configurations (referred to as `extreme configurations') are relatively easy to deal with: all weights smaller than $\sqrt{n \mu}$ corresponds to the cutoff setting, and all weights larger than $\sqrt{n \mu}$ completely eliminates the product of connection probabilities (all equal to one). The remaining configurations (referred to as `middle configurations') are harder to deal with. Based on this idea of conditioning, we establish the following results:
\begin{enumerate}
\item 
In Section~\ref{sec:leading} we obtain the asymptotic behavior of the extreme configurations, and show that the contributions of the middle configurations are asymptotically bounded by the contributions of the extreme configurations. In this way, we circumvent analyzing explicitly the middle configurations, and we obtain the leading-order asymptotics for the average number of cliques in Theorem~\ref{thm:main_clique}.

\item
We then turn in Section~\ref{sec:leading}  to the average number of cycles. The required conditioning is not limited to the vertex weights being smaller or larger than $\sqrt{n \mu}$, but also takes into account how these vertices are arranged on the cycle, making the analysis considerably more difficult. In fact, we first provide in Section~\ref{sec:leading} a lower bound in Theorem~\ref{thm:cycle_lower} on the cycle count for even values of $k$ by considering one specific arrangement (both in terms of size and order) of vertices on a cycle. This lower bound is of interest as it shows that the number of even-sized cycles strictly dominates that of same-sized cliques.

\item
We present in Section~\ref{sec:refined} a more detailed asymptotic analysis of the middle configurations, which in turn leads to sharp asymptotics for the average number of cliques
(Theorems~\ref{thm5.2} and \ref{thm5.5}). For analytic tractability we restrict to the pure power-law case  $F'(h)=c h^{-\tau}$, $h\geq 1$ (with the slowly-varying function taken as a constant).
% and then sketch in Subsection~\ref{subsec5.3} how the proofs can be adapted to include a slowly-varying function.

\item For cycles, the asymptotic evaluation of the integrals involves the theory of circulant matrices (Theorem~\ref{thm5.6}). It turns out that the relevant circulant matrix is regular for odd $k$, and singular for even $k$, leading to different asymptotics for the average number of cliques in Theorems~\ref{thm5.7} and \ref{thm5.8}. The number of even-sized cycles 
are shown to grow faster than even-sized cliques, while odd-sized cycles and odd-sized cliques have comparable growth rates. 

\end{enumerate}
%This precise analysis  
%confirms and sharpens the rougher asymptotics obtained in Section~\ref{sec:leading}.  %and \ref{sec:leadingcycle}. 
\vspace{.2cm}

\noindent{\bf Relation with other work}.
Our results complement  two existing lines of work. For the degree distribution $
\Prob{H>h}=c h^{1-\tau}$ with a support truncated at the cutoff $\sqrt{n \mu} $,
Bianconi and Marsili obtained sharp asymptotics for both clique counts \cite{bianconi2006emergence} and cycle counts \cite{bianconi2005loops}. The main extension in this paper is that we do not impose the cutoff $\sqrt{n \mu}$, as explained above, and hence work with truly heavy-tailed weight distributions.

The other line of work was launched recently by Van der Hofstad {\it et al.}~\cite{hofstad2017d}, who consider $\Prob{H>h}=c h^{1-\tau}$ with infinite support, and study the optimal composition for the most likely subgraph. They showed that for a large class of subgraphs, including cliques and cycles, the optimal composition consists exclusively of vertices with order $\sqrt{n}$ degrees. They also showed that this optimal composition determines up to leading order the asymptotic growth of the average number of subgraphs as function of the network size $n$. We sharpen the asymptotics obtained  in \cite{hofstad2017d} by directly analyzing the integral expressions for the average number of cycles and cliques. We restrict to cliques and cycles, and do not consider all possible subgraphs as in \cite{hofstad2017c}, because we utilize the specific topological structure of cliques and cycles in ways that cannot be easily generalized. 

Apart from sharpening results in \cite{bianconi2006emergence,bianconi2005loops,hofstad2017d}, we sometimes consider a more general setting with the slowly-varying function $l(h)$ in \eqref{eq:pl}, which allows for deviations from the pure power law \cite{welldone}. 
The general consensus is that the exact shape of $l(h)$ is less important than the precise value of $\tau$.
% because $\tau$ crucially determines a range of scale-free network properties. 
For $A_k(n)$ and $C_k(n)$, $\tau$ indeed determines the leading growth rate, but $l(h)$ enters the asymptotic expressions in various non-trivial ways.

%Note that we used Lemma \ref{lemma:BGT1} here as $k > \tau$.

\section{Rough asymptotics}\label{sec:leading}

By $g_1(n) \asymp g_2(n)$ as $n \to \infty$ we are going to understand that there exist constants $C_1 > 0$ and $C_2 < \infty$ such that
\be
C_1 \le \lim\inf_{n\to \infty} \frac{g_1(n)}{g_2(n)} \le \lim\sup_{n\to \infty} \frac{g_1(n)}{g_2(n)} \le C_2.
\ee
We write $g_1(n) \lesssim g_2(n)$ if there exists a constant $C < \infty$ such that
\be
\lim\sup_{n\to \infty} \frac{g_1(n)}{g_2(n)} \le C.
\ee
We also write $g_1(n) \gtrsim g_2(n)$ if there exists a constant $C > 0$ such that
\be
\lim\inf_{n\to \infty} \frac{g_1(n)}{g_2(n)} \ge C.
\ee
Recall that we write $g_1(n)\sim g_2(n)$ when $g_1(n)/g_2(n)\to 1$ as $n\to\infty.$

We write the tail of the degree distribution as $\oF(h) = h^{-\tau+1} l(h)$ with $\tau \in (2,3)$ and $l(h)$  a slowly-varying function.
Note that 
\begin{align}
\oF(\sqrt{n \mu}) &= (\sqrt{\mu})^{1-\tau} (\sqrt{n})^{1-\tau} l(\sqrt{n \mu}) \sim (\sqrt{\mu})^{1-\tau} (\sqrt{n})^{1-\tau} l(\sqrt{n})\nonumber\\ 
&\asymp \oF(\sqrt{n}) \quad {\rm as} \quad  n \to \infty.
\end{align}

\subsection{Rough asymptotics for cliques}

\begin{thm}[Rough asymptotics for cliques] \label{thm:main_clique}
In the rank-1 inhomogeneous random graph with weight distribution \eqref{eq:pl}
and connection probability \eqref{eq:phh}, the average number of cliques of size $k \ge 3$ scales as 
\begin{equation}\label{ras}
A_k(n) \asymp n^k \left(\Fb(\sqrt{n})\right)^{k} = n^{\frac{k}{2}(3-\tau)}l^k(\sqrt{n}) \quad {\rm as} \quad  n \to \infty.
\end{equation}
\end{thm}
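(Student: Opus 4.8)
The plan is to establish matching upper and lower bounds for $A_k(n)$ of order $n^k(\Fb(\sqrt n))^k$, using the conditioning on vertex weights described in the introduction. For the lower bound, restrict the $k$-fold integral defining $A_k(n)$ to the event that \emph{all} $k$ weights exceed $\sqrt{n\mu}$. On this event every pairwise connection probability $p(h_i,h_j)=\min(h_ih_j/(\mu n),1)$ equals $1$, so the integrand is identically $1$ and the contribution is exactly $\binom{n}{k}(\Fb(\sqrt{n\mu}))^k$. Since $\binom{n}{k}\sim n^k/k!$ and $\Fb(\sqrt{n\mu})\asymp\Fb(\sqrt n)$ by the computation preceding the theorem, this already gives $A_k(n)\gtrsim n^k(\Fb(\sqrt n))^k$.

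For the upper bound, split $[1,\infty)^k$ into the $2^k$ regions determined by whether each $h_i$ is below or above the threshold $a_n:=\sqrt{n\mu}$; it suffices to bound each region's contribution by $O(n^k(\Fb(\sqrt n))^k)$. The ``all large'' region contributes exactly $\binom nk(\Fb(a_n))^k$ as above. In a region where a nonempty proper subset $S\subsetneq\{1,\dots,k\}$ has small weights, bound the connection probabilities crudely: for a pair inside $S$ use $p(h_i,h_j)\le h_ih_j/(\mu n)$, for a pair with at least one endpoint outside $S$ use $p\le 1$. This yields a bound of the form $\binom nk (\mu n)^{-\binom{|S|}{2}}\big(\int_1^{a_n}h^{|S|-1}\,dF(h)\big)^{|S|}(\Fb(a_n))^{k-|S|}$, where the exponent $|S|-1$ on $h$ counts the $S$-internal pairs incident to a given small vertex. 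The ``all small'' region ($S=\{1,\dots,k\}$) is handled the same way and reproduces exactly the cutoff computation \eqref{firsth}, giving order $n^k(\Fb(\sqrt n))^k$ as well.

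The key estimate is then the partial moment $\int_1^{a_n}h^{m}\,dF(h)$ for $m=|S|-1\ge 1$: since $\tau\in(2,3)$ and $k\ge 3$, we have $m\ge 1>\tau-1$ precisely when $|S|\ge 2$ (for $|S|=1$ there are no internal pairs and the factor is trivially $\int_1^{a_n}dF\le 1$), so by Lemma~\ref{lemma:BGT1} this behaves like $\frac{\tau-1}{m+1-\tau}a_n^{m}\Fb(a_n)$ up to the slowly varying correction. Plugging this in, the contribution of the region indexed by $S$ with $s:=|S|$ is of order $n^k (\mu n)^{-\binom s2}\big(a_n^{s-1}\Fb(a_n)\big)^{s}(\Fb(a_n))^{k-s}=n^k a_n^{s(s-1)-2\binom s2}\Fb(a_n)^{k}=n^k\Fb(a_n)^k$, since $s(s-1)=2\binom s2$ makes the power of $a_n$ vanish. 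Thus every region contributes the same order $n^k(\Fb(\sqrt n))^k$, and summing the finitely many regions preserves the order.

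The main obstacle is making the crude pairwise bounds in the mixed regions tight enough that the powers of $n$ match exactly, i.e.\ verifying that the bookkeeping of internal-pair exponents $\big(\text{total }=2\binom s2\big)$ cancels the factor $(\mu n)^{-\binom s2}$ for every $s$; this is the content of the identity $s(s-1)=2\binom s2$ and is where the ``soft'' structure of $p(h_i,h_j)$ is used. A secondary technical point is the uniform handling of the slowly varying function $l$ across the threshold, which is already covered by $l(c\sqrt n)\sim l(\sqrt n)$ and the remark that $\Fb(\sqrt{n\mu})\asymp\Fb(\sqrt n)$. The final equality $n^k(\Fb(\sqrt n))^k=n^{\frac k2(3-\tau)}l^k(\sqrt n)$ is then immediate from $\Fb(\sqrt n)=n^{(1-\tau)/2}l(\sqrt n)$.
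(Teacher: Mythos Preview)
Your argument has a genuine gap in the cases $|S|=1$ and $|S|=2$. The inequality you assert, ``$m\ge 1>\tau-1$'', is false: since $\tau\in(2,3)$ we have $\tau-1\in(1,2)$, so $1<\tau-1$. Hence Lemma~\ref{lemma:BGT1}(i) does \emph{not} apply to $\int_1^{a_n}h\,dF(h)$ when $|S|=2$; that integral converges to $\mu<\infty$ rather than growing like $a_n\Fb(a_n)$. Plugging this into your bound for $s=2$ gives
\[
\binom nk(\mu n)^{-1}\Bigl(\int_1^{a_n}h\,dF(h)\Bigr)^{2}(\Fb(a_n))^{k-2}\ \asymp\ n^{k-1}(\Fb(\sqrt n))^{k-2},
\]
and the ratio of this to $n^k(\Fb(\sqrt n))^k$ is $n^{\tau-2}/l^2(\sqrt n)\to\infty$. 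For $s=1$ your bound is $n^k(\Fb(\sqrt n))^{k-1}$, again a factor $1/\Fb(\sqrt n)\to\infty$ too large. So neither case closes.

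The reason your crude scheme fails here is that you discard the cross-edges (one small, one large) via $p\le 1$, but for small $|S|$ those edges carry essential decay: when $h_i\le a_n<h_j$ and $h_j<a_n^2/h_i$, the factor $p(h_i,h_j)=h_ih_j/a_n^2<1$ is needed. The paper keeps these factors for $m=1,2$: it reduces to $k=3$, integrates the large weights over the sub-ranges $[\gamma_n,\gamma_n^2/h_i]$ and $[\gamma_n^2/h_i,\infty)$, and then controls the resulting expressions using monotonicity of $x\Fb(x)$ (via \cite[Theorem~1.5.4]{Bingham1989}) together with Potter-type bounds on $l$. For $m\ge 3$ your decomposition is essentially the paper's, and the exponent bookkeeping $s(s-1)=2\binom s2$ is indeed what makes it work there.
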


Comparing the rough asymptotics \eqref{ras} with 
\eqref{firsth}, we see that imposing a cutoff does not change the leading growth rate $n^{\frac{k}{2}(3-\tau)} l^k(\sqrt{n})$ and it is only the constant that may change. We defer calculating the exact constant to Section~\ref{sec:refined}. Because we already know that the configuration with all weights smaller than $\sqrt{n \mu}$ gives the asymptotics in \eqref{ras}, the goal of the present section is to demonstrate that the contributions of all other configurations of vertex weights (such that at least one of them is larger than $\sqrt{n \mu}$) does not exceed $n^{\frac{k}{2}(3-\tau)} l^k(\sqrt{n})$ asymptotically.

Before we present the proof of Theorem~\ref{thm:main_clique}, observe that the probability of having an edge between vertices $i$ and $j$ can be described as
\be
\P\left(\frac{H_i H_j}{n\mu} > U_{ij}\right), 
\ee
where $H_1,\ldots, H_n$ are independent copies of $H$ and $U_{ij}$ are independent $U(0,1)$ random variables. The model may therefore be thought of as follows: Given a collection of random variables $H_i$ and $U_{ij}$, an edge between vertices $i$ and $j$ is present if
$
{H_i H_j}/{n\mu} > U_{ij}
$.
This may be rewritten as
\be
H_i H_j V_{ij} > n\mu,
\ee
where $V_{ij} = 1/U_{ij}$ has a Pareto$(1)$ distribution: $\P(V_{ij} > x) = 1/x$ for all $x \ge 1$. 
The average number of edges (cliques of size $2$) is then straightforward:
\be
{n\choose 2} \P(H_1 H_2 V > n \mu) \sim n^2 \frac{1}{n \mu} = \frac{n}{\mu},
\ee
as we have a product of three independent regularly varying random variables such that two of them ($H_1$ and $H_2$) are much lighter than the third ($V$), and the result follows since $H$ is regularly varying with a finite mean.
For a general $k$, we see that
\begin{equation}\label{pk}
\P(K_k) =\P(H_i H_j V_{ij} > n \mu, 1 \le i \le k, 1 \le j \le k, i \neq j).
\end{equation}
The proof strategy for Theorem~\ref{thm:main_clique} is to obtain large-$n$ asymptotics for 
$\P(K_k)$ by using the conditioning explained in Section~\ref{intro} and properties of random variables with regularly varying distributions, including the following lemma.

%used for $k>\tau$ the following lemma, which will be invoked again later in the paper:
\begin{lem} \label{lemma:BGT1}
{\rm (i)} If $\beta > \tau-1$, then
\be
\int_{1}^x h^{\beta} dFh) \sim \frac{\tau-1}{\beta-\tau+1} x^{\beta} \oF(x) \quad {\rm as} \quad  x \to \infty.
\ee
{\rm (ii)} If $\beta < \tau-1$, then
\be
\int_{x}^\infty h^{\beta} dF(h) \sim \frac{\tau-1}{\tau-\beta-1} x^{\beta} \oF(x) \quad {\rm as} \quad  x \to \infty.
\ee
\end{lem}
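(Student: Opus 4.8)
The plan is to derive both parts from the regularly varying structure of $\oF$ and a single reference result, namely Karamata's theorem, which appears in \cite{Bingham1989} as Proposition~1.5.8. The key observation is that the Stieltjes measure $dF(h)$ is the measure associated with the nonincreasing function $\oF(h)=h^{-\tau+1}l(h)$, so all integrals against $dF$ can be recast as integrals against $-d\oF$ and then compared with integrals of the regularly varying density (or handled directly through Karamata). In fact, I would first rewrite everything in terms of $\oF$ via integration by parts.

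For part (i), I would write $\int_1^x h^\beta\,dF(h) = -\int_1^x h^\beta\,d\oF(h)$ and integrate by parts to get $-x^\beta\oF(x) + 1\cdot\oF(1) + \beta\int_1^x h^{\beta-1}\oF(h)\,dh$. Now $h^{\beta-1}\oF(h) = h^{\beta-\tau}l(h)$ is regularly varying with index $\beta-\tau$, and since $\beta>\tau-1$ we have $\beta-\tau>-1$, so Karamata's theorem applies and gives $\int_1^x h^{\beta-1}\oF(h)\,dh \sim \frac{1}{\beta-\tau+1}x^{\beta}\oF(x)$ as $x\to\infty$. Since $\oF(x)\to 0$ while $x^\beta\oF(x)\to\infty$ (as $\beta-\tau+1>0$), the boundary term $x^\beta\oF(x)$ and the $O(1)$ constant are both negligible relative to $\frac{\beta}{\beta-\tau+1}x^\beta\oF(x)$; combining, $\int_1^x h^\beta\,dF(h) \sim \left(\frac{\beta}{\beta-\tau+1}-1\right)x^\beta\oF(x) = \frac{\tau-1}{\beta-\tau+1}x^\beta\oF(x)$, which is the claim. (The appearance of $\tau-1$ rather than $\beta$ in the numerator is exactly this cancellation, and it is worth double-checking the sign bookkeeping there.)

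For part (ii), the analogous computation: $\int_x^\infty h^\beta\,dF(h) = -\int_x^\infty h^\beta\,d\oF(h)$, integrate by parts to obtain $x^\beta\oF(x) + \beta\int_x^\infty h^{\beta-1}\oF(h)\,dh$, where the boundary term at infinity vanishes because $h^\beta\oF(h)=h^{\beta-\tau+1}l(h)\to 0$ when $\beta<\tau-1$. Now $h^{\beta-1}\oF(h)$ is regularly varying of index $\beta-\tau<-1$, so the tail version of Karamata's theorem gives $\int_x^\infty h^{\beta-1}\oF(h)\,dh \sim \frac{1}{\tau-\beta}x^\beta\oF(x)$. Hence $\int_x^\infty h^\beta\,dF(h) \sim \left(1+\frac{\beta}{\tau-\beta}\right)x^\beta\oF(x) = \frac{\tau}{\tau-\beta}x^\beta\oF(x)$. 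A small wrinkle is that the stated constant is $\frac{\tau-1}{\tau-\beta-1}$, not $\frac{\tau}{\tau-\beta}$, so I would need to recheck whether $dF$ is meant to have the regularly varying \emph{density} $F'(h) = (\tau-1)h^{-\tau}l(h)(1+o(1))$ (pure power-law normalization) rather than being the full Stieltjes measure of $1-\oF$; under the density interpretation one integrates $(\tau-1)\int_x^\infty h^{\beta-\tau}l(h)\,dh \sim (\tau-1)\frac{1}{\tau-\beta-1}x^{\beta-\tau+1}l(x) = \frac{\tau-1}{\tau-\beta-1}x^\beta\oF(x)$, matching exactly, and similarly for (i). So the cleanest route is actually to work with the density directly: both parts reduce to one application of Karamata's theorem to $\int h^{\beta-\tau}l(h)\,dh$, in the "growing" regime for (i) and the "tail" regime for (ii).

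The main obstacle is not analytic depth — Karamata does essentially all the work — but bookkeeping consistency: pinning down exactly which normalization of $dF$ the paper intends (density $(\tau-1)h^{-\tau}l(h)$ versus the exact Stieltjes measure $d(1-\oF)$, which differ by lower-order slowly varying corrections to $l$), and making sure the slowly-varying factor is treated correctly under the substitution, using $l(cx)\sim l(x)$. Once that is fixed, the slowly varying function simply rides along through Karamata's theorem and the constants fall out as stated. I would also note explicitly that the hypothesis $\beta\neq\tau-1$ in the two cases is exactly what makes the integrals (respectively) diverge at the rate $x^\beta\oF(x)\to\infty$ or converge with tail $x^\beta\oF(x)\to 0$, so that the boundary terms in the integration-by-parts argument are genuinely of lower or matching order as claimed.
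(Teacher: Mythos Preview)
Your approach is exactly the paper's: integrate by parts to bring in $\oF$, then apply Karamata's theorem (Proposition~1.5.8 in \cite{Bingham1989} for (i), Proposition~1.5.10 for (ii)). Part~(i) is carried out correctly and matches the paper line for line.

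In part~(ii) there is a simple arithmetic slip, not a conceptual gap. The integrand $h^{\beta-1}\oF(h)=h^{\beta-\tau}l(h)$ has index $\rho=\beta-\tau$, and the tail version of Karamata gives $\int_x^\infty h^\rho l(h)\,dh\sim \frac{1}{-\rho-1}x^{\rho+1}l(x)$ with $-\rho-1=\tau-\beta-1$, not $\tau-\beta$. With the correct denominator you get
\[
\int_x^\infty h^\beta\,dF(h)\;\sim\;\Bigl(1+\tfrac{\beta}{\tau-\beta-1}\Bigr)x^\beta\oF(x)=\tfrac{\tau-1}{\tau-\beta-1}\,x^\beta\oF(x),
\]
which is the stated constant. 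So the discrepancy you flagged is spurious: the Stieltjes computation already gives the right answer, and there is no need to reinterpret $dF$ as a density. Your ``density route'' also works and yields the same constant, but it tacitly assumes $F$ has a regularly varying density, which is a stronger hypothesis than the paper uses here; the integration-by-parts argument needs only $\oF$ regularly varying.
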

\begin{proof}
(i) Integration by parts leads to
\be \label{lemma:BGT2}
\int_{1}^x h^{\beta} dF(h) = 1 - x^\beta \oF(x) + \beta\int_1^x h^{\beta-\tau} l(h)dh,
\ee
and the integral on the right-hand side  is asymptotically equivalent to
\be
\beta\int_1^x h^{\beta-\tau} l(h)dh \sim \frac{\beta}{\beta-\tau+1} x^{\beta-\tau+1} l(x) = \frac{\beta}{\beta-\tau+1} x^\beta \oF(x)
\ee
thanks to \cite[Proposition 1.5.8]{Bingham1989}. 
The proof of (ii) follows the same lines as (i) and uses \cite[Proposition 1.5.10]{Bingham1989}.
\end{proof}

\noindent{\bf Proof of Theorem \ref{thm:main_clique}.} 
Denote $\gamma_n = \sqrt{n \mu}$ and for $1 \le l \le k$
\begin{equation}
A_l = \{H_1 \le \gamma_n,\ldots, H_l \le \gamma_n\} \quad {\rm and} \quad 
B_l = \{H_l > \gamma_n, \ldots, H_k > \gamma_n\}.
\end{equation}
We focus on the probability
\begin{align}
& \P(H_i H_j V_{ij} > \gamma_n^2, 1 \le i,j \le k, i \neq j) \nonumber
\\ & = \P(A_k, \{H_i H_j V_{ij} > \gamma_n^2, 1 \le i,j \le k, i \neq j\}) + \sum_{m=1}^{k-1} {k \choose m} I_m
 + \P(B_1), \label{eq:S1_regular}
\end{align}
where
\be
I_m = \P(A_m, B_{m+1}, \{H_i H_j V_{ij} > \gamma_n^2, 1 \le i,j \le k, i \neq j\}),
\ee
and in order to prove the theorem, it is sufficient to show that the probability of the left-hand side  of \eqref{eq:S1_regular} behaves asymptotically, in terms of the leading term, as $\left(\oF(\sqrt{n}\right)^k$. We refer to the first and last terms in \eqref{eq:S1_regular} as `extreme configurations' and the summands with $m=1,\dots,k-1$ as `middle configurations'. 

We have seen that the first summand (extreme configuration) behaves asymptotically as
\begin{align}
\left(\frac{\tau-1}{k-\tau}\right)^k \mu^{\frac{k}{2}(1-\tau)} \left(\oF(\sqrt{n})\right)^k \asymp \left(\oF(\sqrt{n})\right)^k. 
\end{align}
%Note that we used Lemma \ref{lemma:BGT1} here as $k > \tau$.
The last summand (contribution of the second extreme configuration) is clearly equal to
$
\left(\oF(\gamma_n)\right)^k \asymp \left(\oF(\sqrt{n})\right)^k
$.
In order to prove Theorem \ref{thm:main_clique}, we then need to show that the contributions of all middle configurations in \eqref{eq:S1_regular} do not exceed that of the extreme configurations,
and are hence bounded from above by $C \left(\oF(\sqrt{n})\right)^k$ with some constant $C < \infty$.

For $m\ge 3$ we have
\begin{align}
& \P(A_m, B_{m+1}, \{H_i H_j V_{ij} > \gamma_n^2, 1 \le i,j \le k, i \neq j\})
\nonumber\\ & \le \P(A_m, \{H_i H_j V_{ij} > \gamma_n^2, 1 \le i,j \le m, i \neq j\}) \P(B_{m+1})
\nonumber\\ & \asymp \left(\oF(\sqrt{n})\right)^{m} \left(\oF(\sqrt{n})\right)^{k-m}= \left(\oF(\sqrt{n})\right)^{k},
\end{align}
due to what we have already shown.

For the cases $m=1$ and $m=2$, we are going to use \cite[Theorem 1.5.4]{Bingham1989}, which implies that there exists a non-increasing function $\psi(x)$ such that $x \oF(x) \sim \psi(x)$ as $x \to \infty$. For $m=1$, note that it is sufficient to consider the case $k=3$ and show that
\be \label{eq:suff_3_regular_1}
\P(H_1 \le \gamma_n, H_2 > \gamma_n, H_3 > \gamma_n, H_i H_j V_{ij} > \gamma_n^2, i \neq j) \lesssim \left(\oF(\sqrt{n})\right)^3.
\ee
Indeed, if \eqref{eq:suff_3_regular_1} holds, then for a general $k$ we have the following estimate:
\begin{align}
& \P(\{H_1 \le \gamma_n\}, B_2, \{H_i H_j V_{ij} > n, 1 \le i,j \le k, i \neq j\})
\nonumber\\ & \le  \P(H_1 \le \gamma_n, H_2 > \gamma_n, H_3 > \gamma_n, H_i H_j V_{ij} > \gamma_n^2, i \neq j) \P(\{H_i > \gamma_n, 4 \le i \le k\})
\nonumber \\ & \lesssim (\oF(\sqrt{n}))^3 (\oF(\sqrt{n}))^{k-3} \lesssim (\oF(\sqrt{n}))^k.
\end{align}
Therefore, for the case $m=1$, it remains to prove \eqref{eq:suff_3_regular_1}.
Write
\begin{align}
&\P(H_1 \le \gamma_n, H_2 > \gamma_n, H_3 > \gamma_n, H_i H_j V_{ij} > \gamma_n^2, i \neq j)
\nonumber \\ & =  \int_1^{\gamma_n} dF(h_1) \left(\int_{\gamma_n}^{\gamma_n^2/h_1} \frac{h_1 h_2}{\gamma_n^2} dF(h_2) + \oF\left(\frac{\gamma_n^2}{h_1}\right) \right)^2 \nonumber\\
&\lesssim \int_1^{\gamma_n} dF(h_1)\left(\frac{h_1}{\gamma_n} \oF\left(\gamma_n\right) + \oF\left(\frac{\gamma_n^2}{h_1}\right) \right)^2.
\end{align}
Note now that
\be
\frac{\gamma_n^2}{h_1} \oF\left(\frac{\gamma_n^2}{h_1}\right) \sim \psi\left(\frac{\gamma_n^2}{h_1}\right) \le \psi(\gamma_n) \sim \gamma_n \oF(\gamma_n),
\ee
as $h_1 \le \gamma_n$, and hence
\begin{align}
& \P(H_1 \le \gamma_n, H_2 > \gamma_n, H_3 > \gamma_n, H_i H_j V_{ij} > \gamma_n^2, i \neq j)
\nonumber\\ & \lesssim  \frac{1}{\gamma_n^2}\left(\oF\left(\gamma_n\right)\right)^2\int_1^{\gamma_n} h_1^2 dF(h_1) \lesssim \frac{1}{\gamma_n^2}\left(\oF\left(\sqrt{n}\right)\right)^2 \gamma_n^2 \oF\left(\gamma_n\right) = \left(\oF\left(\sqrt{n}\right)\right)^3,
%+ \frac{1}{\sqrt{n}}\oF\left(\sqrt{n}\right)\int_1^{\sqrt{n}} h_1 \oF\left(\frac{n}{h_1}\right) dF(h_1) + \int_1^{\sqrt{n}} \left(\oF\left(\frac{n}{h_1}\right)\right)^2 dF(h_1)
%\\ & = & I_1 + I_2 + I_3,
\end{align}
where we used Lemma \ref{lemma:BGT1}. 
It now remains to consider the case $m=2$. Note that, again, it is sufficient to consider the case $k=3$ and show that
\be \label{eq:suff_3_regular}
\P(H_1 \le \gamma_n, H_2 \le \gamma_n, H_3 > \gamma_n, H_i H_j V_{ij} > \gamma_n^2, i \neq j) \lesssim (\oF(\sqrt{n}))^3.
\ee
Indeed, if this is the case, we can write
\begin{align}
& \P(A_2, B_3, \{H_i H_j V_{ij} > \gamma_n^2, 1 \le i,j \le k, i \neq j\})
\nonumber\\ & \le \P(H_1 \le \gamma_n, H_2 \le \gamma_n, H_3 > \gamma_n, \{H_i H_j V_{ij} > \gamma_n^2, i\neq j\}) \P(\{H_i > \gamma_n, 4 \le i \le k\})
\nonumber\\  & \lesssim (\oF(\sqrt{n}))^3 (\oF(\sqrt{n}))^{k-3} \lesssim  (\oF(\sqrt{n}))^k,
\end{align}
which is sufficient. Therefore it remains to prove \eqref{eq:suff_3_regular}. Write
\begin{align} \label{eq:case_m2}
& \P(H_1 \le \gamma_n, H_2 \le \gamma_n, H_3 > \gamma_n, H_i H_j V_{ij} > \gamma_n^2, i \neq j) 
\nn \\ & = 2 \P(H_1 \le \gamma_n, H_2 \le H_1, H_3 > \gamma_n, H_i H_j V_{ij} > \gamma_n^2, i \neq j)
\nn 
\\ & = \int_1^{\gamma_n} dF(h_1) \int_1^{h_1} dF(h_2) \frac{h_1 h_2}{\gamma_n^2} \left(R_1(h_1, h_2) + R_2(h_1, h_2) + R_3(h_1, h_2) \right),
\end{align}
where
\be
R_1(h_1, h_2) = \int_{\gamma_n}^{\gamma_n^2/h_1} \frac{h_1 h_2 h_3^2}{\gamma_n^4} dF(h_3) \asymp \frac{h_1 h_2}{\gamma_n^4}\left(\frac{\gamma_n^2}{h_1}\right)^{2}\oF\left(\frac{\gamma_n^2}{h_1}\right) = \frac{h_2}{h_1}\oF\left(\frac{\gamma_n^2}{h_1}\right),
\ee
\be
R_2(h_1, h_2) = \int_{\gamma_n^2/h_1}^{\gamma_n^2/h_2} \frac{h_2 h_3}{\gamma_n^2} dF(h_3) = \frac{h_2}{\gamma_n^2} \frac{\gamma_n^2}{h_1} \oF\left(\frac{\gamma_n^2}{h_1}\right) = R_1(h_1,h_2),
\ee
and
\be
R_3(h_1, h_2) = \oF\left(\frac{\gamma_n^2}{h_2}\right).
\ee
%\begin{align}
% & = \int_1^{\gamma_n} dF(h_1) \int_1^{h_1} dF(h_2) \frac{h_1 h_2}{\gamma_n^2} \left(\int_{\gamma_n}^{\gamma_n^2/h_1} \frac{h_1 h_2 h_3^2}{\gamma_n^4} dF(h_3) + \int_{\gamma_n^2/h_1}^{\gamma_n^2/h_2} \frac{h_2 h_3}{\gamma_n^2} dF(h_3) + \oF\left(\frac{\gamma_n^2}{h_2}\right)\right)
%\nn \\ & \lesssim \int_1^{\gamma_n} dF(h_1) \int_1^{h_1} dF(h_2) \frac{h_1 h_2}{\gamma_n^2} \left(\frac{h_1 h_2}{\gamma_n^4}\left(\frac{\gamma_n^2}{h_1}\right)^{2}\oF\left(\frac{\gamma_n^2}{h_1}\right) + \frac{h_2}{\gamma_n^2} \frac{\gamma_n^2}{h_1} \oF\left(\frac{\gamma_n^2}{h_1}\right) + \oF\left(\frac{\gamma_n^2}{h_2}\right)\right) \nn
%\\ & = R_1 + R_2 + R_3,
%\end{align}
Therefore, in order to bound the right-hand side of \eqref{eq:case_m2} from above, we need to bound the integrals involving $R_1$ and $R_3$. The integral involving $R_1$ may be rewritten as
\begin{align}
\frac{1}{\gamma_n^2} \int_1^{\gamma_n} \oF(\gamma_n^2/h_1) dF(h_1)\int_1^{h_1} h_2^2 dF(h_2) & \lesssim \frac{1}{\gamma_n^2}\int_1^{\gamma_n}h_1^{2} \oF(\gamma_n^2/h_1) \oF(h_1) dF(h_1)\nn \\ 
& \asymp n^{-\tau}\int_1^{\gamma_n} h_1^2 l(\gamma_n^2/h_1)l(h_1) dF(h_1). 
\end{align}
Take $\delta>0$ and note that (due to \cite[Theorem 1.5.6]{Bingham1989}) there exists a constant $A$ such that 
\be \label{eq:bound_svf_1}
\frac{l(\gamma_n^2/h_1)}{l(\gamma_n)} \le A \left(\frac{\gamma_n}{h_1} \right)^{\delta}
\ee 
and
\be \label{eq:bound_svf_2}
\frac{l(h_1)}{l(\gamma_n)} \le A \left(\frac{h_1}{\gamma_n}\right)^\delta
\ee
for large enough $n$ and for all $h_1 \le \gamma_n$. Using \eqref{eq:bound_svf_1} and \eqref{eq:bound_svf_2}, we can then bound the last integral with the following expression:
\be
A^2 n^{-\tau} \left(l(\gamma_n)\right)^2 \int_1^{\sqrt{n}}h_1^2 dF(h_1) \lesssim n^{-\tau} \left(l(\sqrt{n})\right)^2 (\sqrt{n})^2 \oF(\sqrt{n}) \lesssim \left(\oF(\sqrt{n})\right)^3.
\ee
It now remains to bound the integral on the right-hand side of \eqref{eq:case_m2} involving $R_3$:
\begin{align}
& \int_1^{\gamma_n} dF(h_1) \int_1^{h_1} dF(h_2) \frac{h_1 h_2}{\gamma_n^2} \oF\left(\frac{\gamma_n^2}{h_2}\right)
\nn \\ & = \frac{1}{\gamma_n^2} \int_1^{\gamma_n} h_2 \oF\left(\frac{\gamma_n^2}{h_2}\right) dF(h_2) \int_{h_2}^{\gamma_n} h_1 dF(h_1) 
\nn \\ & \asymp \frac{1}{\gamma_n^2} \int_1^{\gamma_n} h_2^2 \oF(h_2) \oF\left(\frac{\gamma_n^2}{h_2}\right) dF(h_2),
\end{align}
which is asymptotically equivalent to the integral involving $R_1$. \qed

\subsection{Rough asymptotics for cycles}%\label{sec:leadingcycle} 

The following theorem illustrates that cycles with an even number of vertices are more likely than cliques with the same number of nodes. The proof is constructive as it presents a particular configuration resulting in the leading asymptotics. In Section~\ref{sec:refined} we present precise asymptotic analysis using a different, more involved  method.

As was pointed out in the introduction, the asymptotic analysis of the number of cycles is more difficult than that of the number of cliques, as not only the weights of vertices matter but also their locations in the cycle. To simplify the analysis, we therefore restrict attention to the case of weight distributions having regularly varying densities.

\begin{thm}[Lower bound cycle asymptotics] \label{thm:cycle_lower}
In the rank-1 inhomogeneous random graph with weight density $\rho(h) = h^{-\tau} l(h)$, average weight $\mu$
and connection probability \eqref{eq:phh}, the average number of cycles of even  size $k \ge 4$ satisfies
\begin{equation} \label{eq:cycle_even}
C_k(n) \gtrsim n^k  \left(\Fb(\sqrt{n})\right)^{k} \int_{\sqrt{n \mu}}^{n \mu} \frac{1}{h} \frac{l^{k/2}(h) l^{k/2}(n \mu/h)}{l^k(\sqrt{n \mu})} dh.
\end{equation}
\end{thm}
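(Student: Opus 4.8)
The plan is to exhibit one explicit family of configurations of vertices on the $k$-cycle and show that its contribution to $C_k(n)$ already matches the right-hand side of \eqref{eq:cycle_even}; since $C_k(n)$ is a sum of non-negative contributions over all configurations, this gives the desired lower bound. For even $k$, I would alternate the weights around the cycle: label the vertices $1,2,\dots,k$ cyclically, and restrict to the event that the odd-indexed vertices have ``small'' weights and the even-indexed vertices have ``large'' weights, or more precisely that each small weight $h$ is paired with neighbours whose weights are of order $n\mu/h$. Concretely, one integrates over $h_1,h_3,\dots,h_{k-1}\in[\text{something},\sqrt{n\mu}]$ and $h_2,h_4,\dots,h_k$ in a range chosen so that every edge $(i,i+1)$ of the cycle is automatically present, i.e. $h_ih_{i+1}\ge n\mu$ (no $V$-randomness needed), and all non-edges of the cycle are irrelevant because we only need a lower bound on $\P(C_k)$. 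Since each vertex in a cycle has exactly two neighbours, alternating small/large weights lets each of the $k/2$ large vertices ``cover'' its two small neighbours simultaneously — this is precisely why even cycles behave better than cliques, where every vertex must connect to all $k-1$ others.

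The key steps, in order: (1) Using \eqref{defcycle}, write $C_k(n)=\frac{k!}{2k}\binom{n}{k}\P(C_k)$ and note $\frac{k!}{2k}\binom{n}{k}\sim \frac{n^k}{2k}$, so it suffices to show $\P(C_k)\gtrsim (\Fb(\sqrt n))^k \int_{\sqrt{n\mu}}^{n\mu}\frac1h \frac{l^{k/2}(h)l^{k/2}(n\mu/h)}{l^k(\sqrt{n\mu})}\,dh$ up to the constant $2k$. (2) Bound $\P(C_k)$ below by the probability of the alternating event described above with all cycle-edges forced present: this reduces to a $k/2$-fold integral $\prod_{\text{odd }i}\rho(h_i)\,dh_i$ times, for each odd $i$, the probability that $h_{i-1}$ and $h_{i+1}$ exceed $n\mu/h_i$ — but since consecutive large vertices are shared, I would instead set it up as a genuine $k/2$-fold integral where one substitutes, for the large vertices, the tail factor $\Fb(n\mu/h)$. (3) After the dust settles the integrand over the small weights factorizes into $\prod_{\text{odd }i}\big[\rho(h_i)\,\Fb(n\mu/h_i)\cdot(\text{local factor})\big]$; change variables $h_i\mapsto h$ with $h$ ranging essentially over $[\sqrt{n\mu},n\mu]$ (the regime where $n\mu/h$ is itself $\ge\sqrt{n\mu}$ but $h\le n\mu$), use $\Fb(h)=h^{1-\tau}l(h)$ and $\rho(h)=h^{-\tau}l(h)$, and collect powers: the $\tau$-powers combine to give $h^{-1}$ inside the integral and an overall factor $(n\mu)^{\text{something}}$ that, together with the $n^k$ and the definition $\Fb(\sqrt n)^k\asymp(n\mu)^{k(1-\tau)/2}l^k(\sqrt n)$, reproduces exactly $n^k(\Fb(\sqrt n))^k$ times the stated integral. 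The slowly-varying parts contribute $l^{k/2}(h)l^{k/2}(n\mu/h)$ (half the weights evaluated at $h$, half at $n\mu/h$) normalized by $l^k(\sqrt{n\mu})$, matching the integrand.

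The main obstacle I expect is step (3): getting the bookkeeping of the power-law exponents and the slowly-varying factors to land precisely on the claimed expression, and in particular justifying the reduction of the multi-dimensional integral to the one-dimensional integral in \eqref{eq:cycle_even}. The subtlety is that the large-vertex weights are shared between two small neighbours, so one cannot naively write the contribution as a product over independent pairs; the alternating structure must be unwound carefully (e.g. by fixing all the small weights equal, or by a telescoping change of variables along the cycle) so that only a single effective integration variable $h$ survives, with the remaining $k/2-1$ small-vertex integrations and $k/2$ large-vertex tail factors producing constants (absorbed into $\gtrsim$) and the slowly-varying ratio shown. A secondary technical point is handling the regularly-varying function $l$ uniformly on the range $[\sqrt{n\mu},n\mu]$: here one would invoke the Potter-type bounds (\cite[Theorem 1.5.6]{Bingham1989}, as already used in the clique proof) to control $l(h)/l(\sqrt{n\mu})$ and $l(n\mu/h)/l(\sqrt{n\mu})$, ensuring the integral is well-defined and the asymptotic constants are genuinely bounded away from zero.
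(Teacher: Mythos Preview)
Your alternating-configuration idea is exactly the paper's, and you have correctly located the crux in the sharing of each large vertex between two small neighbours. The paper's execution differs from your sketch in one structural respect: it does \emph{not} force all cycle-edges present. Instead it places the large (odd-indexed) weights in $[\gamma_n,\gamma_n^2]$, the small (even-indexed) weights in $[1,\gamma_n]$, and retains the fractional connection probabilities $h_{2j}h_{2j\pm1}/\gamma_n^2$. The small vertices are integrated out \emph{first}, each via $\int_1^{\gamma_n^2/M_j} h_{2j}^2\,\rho(h_{2j})\,dh_{2j}$ with $M_j=\max(h_{2j-1},h_{2j+1})$, evaluated through Lemma~\ref{lemma:BGT1}; this leaves a coupled integral over the large weights alone. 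The coupling is then broken by restricting to the ordered region $h_1>h_3>\cdots>h_{k-1}$ and integrating inductively from the innermost variable outward with Potter bounds, so that a single integral over $h_1\in[\gamma_n,\gamma_n^2]$ survives --- this $h_1$ is the variable $h$ in~\eqref{eq:cycle_even}.

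Your dual route (integrate out the large vertices via tail factors $\Fb(\gamma_n^2/\min)$, then handle the small ones) is viable and leads to the same bound after the substitution $h=n\mu/s$. Your factorized form $\prod_i \rho(h_i)\,\Fb(n\mu/h_i)$ is \emph{not} a valid lower bound as written, since $\Fb$ is decreasing and the true argument involves the $\min$ of two neighbours; but both of your proposed repairs work. The telescoping one is precisely the paper's mechanism (applied there to the large rather than the small vertices). The ``fix all small weights equal'' idea also succeeds if interpreted as restricting all small weights to a common dyadic block $[s,2s]$ and summing over scales: on each block the $\min$-coupling disappears up to a constant, the block contributes $\asymp\gamma_n^{k(1-\tau)}l^{k/2}(s)\,l^{k/2}(\gamma_n^2/s)$, and the dyadic sum over $s\in[1,\gamma_n]$ becomes the integral in~\eqref{eq:cycle_even} after $h=\gamma_n^2/s$. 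This is arguably the most direct path, though the paper opts for the ordered-telescoping computation instead.
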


\begin{remark}\label{remm1}
Theorem \ref{thm:cycle_lower} implies that the asymptotics of $C_k(n)$ are heavier than the asymptotics of the number of cliques $A_k(n)$.
Indeed, fix $a > \sqrt{\mu}$. Then for $n\mu\geq a\sqrt{n}$, i.e., $n\geq (a/\mu)^2$,
\begin{align}
&\int_{\sqrt{\mu n}}^{n \mu} \frac{1}{h} \frac{l^{k/2}(h) l^{k/2}(n \mu/h)}{l^k(\sqrt{n \mu})} dh  \ge  \int_{\sqrt{\mu n}}^{a \sqrt{n}} \frac{1}{h} \frac{l^{k/2}(h) l^{k/2}(n \mu/h)}{l^k(\sqrt{n \mu})} dh
\nn\\ & \sim  \int_{\sqrt{\mu n}}^{a \sqrt{n}} \frac{1}{h}\frac{l^{k/2}(\sqrt{n}) l^{k/2}(\sqrt{n})}{l^k(\sqrt{n})} dh =\int_{\sqrt{ \mu n}}^{a \sqrt{n}} \frac{1}{h} dh=  \log a - \tfrac12 \log \mu,
\end{align}
where in the second step we have used that $n\mu /h\in [\mu\sqrt{n}/a,\sqrt{\mu n}]$ when $h\in[\sqrt{\mu n},a\sqrt{n}]$ so that both $l(h)/l(\sqrt{n})\to 1$ and 
$l(n \mu/h)/l(\sqrt{n})\to 1$ uniformly in $h\in[\sqrt{\mu n},a\sqrt{n}]$ as $n\to\infty$. 
As $a$ is arbitrary, $\log a - \tfrac12 \log \mu$ may be  arbitrarily large.
\end{remark}

\noindent{\bf Proof of Theorem \ref{thm:cycle_lower}.} 
From \eqref{defcycle} we see that 
$
C_k(n)  \asymp n^k \P(C_k)$, so it is 
sufficient to show that
\be
\P(C_k) \gtrsim \left(\Fb(\sqrt{n})\right)^{k} \int_{\sqrt{n \mu}}^{n \mu} \frac{1}{h} \frac{l^{k/2}(h) l^{k/2}(n \mu/h)}{l^k(\sqrt{n \mu})} dh.
\ee

We again use notation $\gamma_n = \sqrt{n \mu}$. Fix a $k=2m$ for an integer $m$ and write (with the convention that by index $k+1$ we understand index $1$). Denote by $A = \{\gamma_n <H_{2i-1} < \gamma_n^2, 1 \le i \le m\}$ and by $B = \{\{H_{2j} \le \gamma_n, 1 \le j \le m\}$ and write
\begin{align}
& \P(C_k) = \P(H_i H_{i+1} V_{i,i+1} > \gamma_n^2, 1 \le i \le k) 
\nn\\ & \ge \P(A, B, \{H_i H_{i+1} V_{i,i+1} > \gamma_n^2, 1 \le i \le k\})
\nn\\ & \ge \prod_{i=1}^{m} \int_{\gamma_n}^{\gamma_n^2} \rho(h_{2i-1}) dh_{2i-1} \prod_{j=1}^m \int_1^{\gamma_n^2/M_j} \frac{h_{2j}^2 h_{2j-1} h_{2j+1}}{\gamma_n^4} \rho(h_{2j}) dh_{2j},
\end{align}
with $M_j = \max\{h_{2j-1},h_{2j+1}\}$ (we will also use $m_j = \min\{h_{2j-1},h_{2j+1}\}$). Thanks to Lemma \ref{lemma:BGT1}, the previous terms may be bounded from below by
\begin{align}
& \prod_{i=1}^{m} \int_{\gamma_n}^{\gamma_n^2} \rho(h_{2i-1}) dh_{2i-1} \prod_{j=1}^m \frac{h_{2j-1} h_{2j+1}}{\gamma_n^4} \left(\frac{\gamma_n^2}{M_j} \right)^{-\tau+3} l\left(\frac{\gamma_n^2}{M_j}\right)
\nn\\ & \asymp n^{k/2(-\tau+1)} \prod_{i=1}^{m} \int_{\gamma_n}^{\gamma_n^2} \rho(h_{2i-1}) dh_{2i-1} \prod_{j=1}^m \frac{m_j}{M_j} \left(M_j\right)^{\tau-1} l\left(\frac{\gamma_n^2}{M_j}\right)
\nn\\ & \ge n^{k/2(-\tau+1)} \int_{\gamma_n}^{\gamma_n^2} \rho(h_1) dh_1 \int_{\gamma_n}^{h_1} \rho(h_3) dh_3 \ldots \int_{\gamma_n}^{h_{k-3}} \rho(h_{2k-1}) dh_{2k-1} 
\nn\\ & \left(h_{2k-1}^2 h_1^{-2} h_1^{2(\tau-1)} h_3^{\tau-1} \ldots h_{2k-3}^{\tau-1} l^2\left(\frac{\gamma_n^2}{h_1}\right) l\left(\frac{\gamma_n^2}{h_3}\right) \ldots l\left(\frac{\gamma_n^2}{h_{2k-3}}\right)\right).
\end{align}
Note now that the above integral with respect to $h_{2k-1}$  reads
\begin{align}
\int_{\gamma_n}^{h_{2k-3}} \rho(h_{2k-1}) h_{2k-1}^2 dh_{2k-1} & = \int_{\gamma_n}^{h_{2k-3}} h_{2k-1}^{-\tau+2} l(h_{2k-1}) dh_{2k-1} \nn \\ & \gtrsim h_{2k-3}^{-\tau+3} l(h_{2k-3}),
\end{align}
and hence the integral with respect to $h_{2k-3}$ satisfies
\begin{align}
& \int_{\gamma_n}^{h_{2k-5}} \rho(h_{2k-3}) h_{2k-3}^2 l(h_{2k-3}) l\left(\frac{\gamma_n^2}{h_{2k-3}}\right) dh_{2k-3} 
\nn\\ & = \int_{\gamma_n}^{h_{2k-5}} h_{2k-3}^{-\tau+2} l^2(h_{2k-3}) l\left(\frac{\gamma_n^2}{h_{2k-3}}\right) dh_{2k-3}
\nn\\ & \gtrsim l\left(\frac{\gamma_n^2}{h_{2k-5}}\right) \int_{\gamma_n}^{h_{2k-5}} h_{2k-3}^{-\tau+2} l^2(h_{2k-3}) \left(\frac{h_{2k-5}}{h_{2k-3}}\right)^{-\delta} dh_{2k-3}
\nn\\ & \gtrsim l\left(\frac{\gamma_n^2}{h_{2k-5}}\right) h_{2k-5}^{-\delta} h_{2k-5}^{-\tau+3+\delta} l^2(h_{2k-5}) = h_{2k-5}^{-\tau+3} l^2(h_{2k-5})l\left(\frac{\gamma_n^2}{h_{2k-5}}\right),
\end{align}
where we used \cite[Theorem 1.5.6]{Bingham1989}. It is easy to see how this may be continued by induction to show that
\begin{align}
\P(C_k) & \gtrsim  n^{k/2(-\tau+1)} \int_{\gamma_n}^{\gamma_n^2} \rho(h_1) dh_1 h_1^{-2} h_1^{2(\tau-1)} h_1^{-\tau+3} l^{k/2-1}(h_1)l^{k/2}\left(\frac{\gamma_n^2}{h_1}\right) 
\nn\\ & =  n^{k/2(-\tau+1)} \int_{\gamma_n}^{\gamma_n^2} h_1^{-1} l^{k/2}(h_1)l^{k/2}\left(\frac{\gamma_n^2}{h_1}\right) dh_1,
\end{align}
which completes the proof. \qed
\section{Precise asymptotics}\label{sec:refined}
%\section{Refined asymptotics for the average number of $k$-cliques and $k$-cycles} \label{sec5}
%\mbox{} \\[-9mm]

We now consider the pure power-law case
\beq \label{5.2}
\rho(h)=F'(h)=c\,h^{-\tau},\quad h\geq 1,
\eq
with $c=\tau-1$, and derive more precise asymptotic results for the 
average number of $k$-cliques $A_k(n)$ and $k$-cycles $C_k(n)$ in the large-network limit $n\to\infty$. We choose to work with the pure power law density, instead of the regularly varying distribution function \eqref{eq:pl}, to suppress notation in view of the elaborate calculations that will follow. 
%In Subsection~\ref{subsec5.3} we explain how to adapt the calculations to include a slowly-varying function.
We again use as short-hand notation
$
\cutoff=\sqrt{n\mu}
$ with $\mu=(\tau-1)/(\tau-2)$, 
so that the connection probability  in \eqref{eq:phh} can be writen as
\begin{equation}\label{eq:phhgeneral}
		p(h_i,h_j)=f\left(\frac{h_i h_j}{\cutoff^2}\right),
\end{equation}
with $f(x)=\min\,\{1,x\}$. More generally, in Subsection \ref{subsec5.2} we present results that hold for the class of continuous nonnegative nondecreasing functions $f$ considered in \cite{hofstad2017b} that satisfy 
\beq \label{5.5}
\frac{f(x)}{x}\pr 1\,,~~x\downarrow 0~;~~~~~~f(x)\pr 1\,,~~x\pr\infty. 
\eq
Observe that for $x\geq 0$, the function $f(x)=\min\,\{1,x\}$ belongs to this class, and so do other standard choices like $f(x)={x}/{(1+x)}$ and $f(x)=1-{\rm e}^{-x}$.

As before, the notation $g_1(n)\sim g_2(n)$ is used for $g_1(n)=g_2(n)(1+o(1))$ as $n\pr\infty$.

% and
%\beq \label{5.3}
%C=\left(\il_1^{h_c}\,h^{-\tau}\,dh\right)^{-1}=(\tau-1)(1-h_c^{1-\tau})^{-1}\approx\tau-1. 
%\eq
% The probability that vertex $i$ and vertex $j$ are connected is given by
%\beq \label{5.4}
%f\Bigl(\frac{h_ih_j}{\cutoff^2}\Bigr)
%\eq
%with $h_i$ and $h_j$ the value of the hidden variable of vertex $i$ and vertex $j$, respectively. The function $f$ in (\ref{5.4}) is a continuous, non-negative, non-decreasing non-linearity of the type considered in \cite{ref15}. It satisfies
%\beq \label{5.5}
%\frac{f(x)}{x}\pr 1\,,~~x\downarrow 0~;~~~~~~f(x)\pr 1\,,~~x\pr\infty. 
%\eq
%As prime examples of such $f$'s, we have
%\beq \label{5.6}
%\min\,\{1,x\}\,,~~\frac{x}{1+x}\,,~~1-{\rm e}^{-x}\,,~~x\geq 0. 
%\eq

\subsection{Precise asymptotics for cliques} \label{subsec5.1}

With $k\geq 3$, the average number $A_k(n)$ of $k$-cliques equals
\beq \label{5.7}
A_k(n)=\Bigl(\!\ba{c} n \\ k \ea\!\Bigr)\,\il_1^{\infty}\cdots \il_1^{\infty}\,\rho(h_1)\cdots\rho(h_k)\,\prod_{1\leq i<j\leq k}\,f\Bigl(\frac{h_ih_j}{\cutoff^2}\Bigr)\,dh_1\cdots dh_k. 
\eq
To analyze this $k$-fold integral we make the choice $f(x)=\min\{1,x\}$, $x\geq0$. We split the integral in (\ref{5.7}) into $k+1$ integrals over subranges where precisely $m$ of the hidden variables $h_i$ are $\leq\, \cutoff$ while the $k-m$ others are $\geq \,\cutoff$, $m=0,1,...,k$. Observe that this range splitting is exactly the same as the conditioning used in \eqref{eq:S1_regular}. By symmetry of the integrand we have
\beq \label{5.8}
A_k(n)=\Bigl(\!\ba{c} n \\ k \ea\!\Bigr)\,\sum_{m=0}^k\,\Bigl(\!\ba{c} k \\ m \ea\!\Bigr)\,I_m, 
\eq
where $I_m$ is the contribution of the range
\beq \label{5.9}
1\leq h_1,h_2,...,h_m\leq \cutoff\leq h_{m+1},h_{m+2},...,h_k\leq h_c. 
\eq
By the choice $f(x)=\min\{1,x\}$, we have
\beq \label{5.10}
I_0\sim \cutoff^{k(1-\tau)}, ~~~~~~I_k\sim \cutoff^{k(1-\tau)}\Bigl(\frac{\tau-1}{k-\tau}\Bigr)^k. 
\eq
For $m=1,2,...,k-1$ we have
\begin{align} \label{5.11}
& \mbox{}  I_m=
\begin{array}[t]{c} \underbrace{\il_1^{\cutoff}\cdots\il_1^{\cutoff}} \\ m \end{array}
\begin{array}[t]{c} \underbrace{\il_{\cutoff}^{\infty}\cdots\il_{\cutoff}^{\infty}} \\ k-m \end{array}
\,\rho(h_1)\cdots\rho(h_m)\,\rho(h_{m+1}\cdots\rho(h_k) \nonumber \\ 
& \hspace*{2.8cm}\times\prod_{1\leq i<j\leq k}\,f\Bigl(\frac{h_ih_j}{\cutoff^2}\Bigr)\,dh_1\cdots dh_mdh_{m+1}\cdots dh_k. 
\end{align}
The main result for $I_m$ reads as follows.

\begin{prop} \label{prop5.1}
For $m=1,2,...,k-1$
\beq \label{5.12}
I_m \sim \cutoff^{k(1-\tau)}(\tau-1)^m\,m!\,J_m,
\eq 
where
\beq \label{5.13a}
J_m=\il_0^1\cdots\il_0^1\,\Bigl(\prod_{i=1}^m\,t_i^{i(m-\tau)-1}\Bigr)\,\Phi_m^{k-m}(t_1,...,t_m)\,dt_1 \cdots dt_m, 
\eq
with
\beq \label{5.13}
\Phi_1(t_1)=t_1\Bigl(\frac{\tau-1}{\tau-2}-\frac{t_1^{\tau-2}}{\tau-2}\Bigr)\:,~~~~0\leq t_1\leq1, 
\eq
\beq \label{5.14}
\Phi_2(t_1,t_2)=t_1t_2^{\tau-1}\Bigl(\frac{\tau-1}{(3-\tau)(\tau-2)}-\frac{t_1^{\tau-2}}{\tau-2}- \frac{\tau-1}{3-\tau}\,t_2^{3-\tau}\Bigr)\:,~~~~0\leq t_1,t_2\leq 1, 
\eq
and, for $m=3,...,k-1\,$ and $0\leq  t_1,...,t_m\leq1$,
\begin{align} \label{5.15}
\Phi_m(t_1,...,t_m)  &=  t_1t_2^{\tau-1}\cdots t_m^{\tau-1}\,\Bigl(\frac{\tau-1}{(3-\tau)(\tau-2)} -\frac{t_1^{\tau-2}}{\tau-2} \nonumber \\
&  -~\sum_{j=3}^m\,\frac{\tau-1}{(j+1-\tau)(j-\tau)}\,t_2^{3-\tau}t_3^{4-\tau}\cdots t_{j-1}^{j-\tau} \nonumber \\&  -~\frac{\tau-1}{m+1-\tau}\,t_2^{3-\tau}t_3^{4-\tau}\cdots t_m^{m+1-\tau}\Bigr).
\end{align}
\end{prop}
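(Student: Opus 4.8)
The strategy is to compute the large-$n$ asymptotics of the $k$-fold integral $I_m$ in \eqref{5.11} by rescaling every hidden variable relative to the cutoff $\cutoff$ and carrying out the integrations over the $k-m$ ``large'' variables $h_{m+1},\dots,h_k$ one at a time. First I would substitute $h_i=\cutoff\, t_i$ for $i=1,\dots,m$ (so $t_i\in(0,1]$) and $h_j=\cutoff\, s_j$ for $j=m+1,\dots,k$ (so $s_j\in[1,\infty)$). Under $\rho(h)=c\,h^{-\tau}$ with $c=\tau-1$, each $\rho(h_i)\,dh_i$ contributes $c\,\cutoff^{1-\tau}t_i^{-\tau}\,dt_i$, giving an overall prefactor $\cutoff^{k(1-\tau)}$ together with $c^k=(\tau-1)^k$; I will peel off $(\tau-1)^{k-m}$ of these powers during the large-variable integrations and retain $(\tau-1)^m$, which will match \eqref{5.12}. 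The connection factors $f(h_ih_j/\cutoff^2)=\min\{1,t_it_j\}$ (for $i,j\le m$, where $t_it_j\le1$ this is simply $t_it_j$), $\min\{1,t_is_j\}$ (mixed), and $\min\{1,s_js_\ell\}=1$ (both large) replace the product $\prod_{i<j}f(\cdot)$; in particular the $\binom{k-m}{2}$ factors among the large variables all equal $1$, and the $\binom{m}{2}$ factors among the small variables contribute $\prod_{i<j\le m}t_it_j=\prod_{i=1}^m t_i^{m-i}$ (after ordering $t_1\ge\cdots\ge t_m$, which accounts for the $m!$).

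The heart of the argument is the iterated integration over the large variables. After ordering $t_1\ge t_2\ge\cdots\ge t_m$ (legitimate by symmetry, producing the factor $m!$), I integrate out $s_j$ for each large index $j$: the constraint is $s_j>1$ together with $s_j\,t_i>1$ for the small neighbours, i.e.\ $s_j>1/t_{(j)}$ where $t_{(j)}$ is the largest small-variable weight it must connect to; where $s_j t_i\le 1$ the factor is $s_j t_i$ and where $s_jt_i>1$ it is $1$, so each $\int s_j^{-\tau}\cdot(\text{product of }\min\{1,s_jt_i\})\,ds_j$ splits into a finite sum of elementary integrals of the form $\int_a^b s^{-\tau+r}\,ds$. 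Carrying this out systematically — and this is the step I expect to be the main obstacle, since the combinatorics of which mixed factors are ``active'' depends on the relative sizes of the $t_i$ and must be tracked carefully — produces exactly the function $\Phi_m(t_1,\dots,t_m)$ raised to the power $k-m$: the term $\frac{\tau-1}{(3-\tau)(\tau-2)}$ comes from the range where $s_j$ is so large that all mixed factors are $1$, the $-\frac{t_1^{\tau-2}}{\tau-2}$ and the subtracted sum $-\sum_{j=3}^m\frac{\tau-1}{(j+1-\tau)(j-\tau)}t_2^{3-\tau}\cdots t_{j-1}^{j-\tau}$ come from the intermediate ranges where progressively more mixed factors are active, and the final $-\frac{\tau-1}{m+1-\tau}t_2^{3-\tau}\cdots t_m^{m+1-\tau}$ from the smallest range. (The special forms \eqref{5.13}, \eqref{5.14} for $m=1,2$ are just the base cases of this recursion, where the middle sum is empty.)

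It remains to check convergence of the resulting $m$-fold integral $J_m$ in \eqref{5.13a}: near $t_i\downarrow0$ the integrand behaves like $t_i^{i(m-\tau)-1}$ times bounded factors (the $\Phi_m$ has a factor $t_1 t_2^{\tau-1}\cdots t_m^{\tau-1}$, contributing $(k-m)(\tau-1)$ extra powers of $t_i$ for $i\ge2$), and one verifies the exponents stay above $-1$ using $\tau\in(2,3)$ and $m\le k-1$; this is the point where the restriction to the pure power law and the explicit combinatorial form of $\Phi_m$ pay off, and it is also where one sees that the integral $J_m$ is genuinely finite so that $I_m$ is of exact order $\cutoff^{k(1-\tau)}$. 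Throughout, the passage from ``$\sim$'' in the intermediate integrations to the final asymptotic equivalence is justified by dominated convergence, using that the error terms in each elementary integral $\int_1^{\cutoff} s^{-\tau+r}\,ds$ versus $\int_1^\infty$ are uniformly small; the slowly varying function plays no role here since we have set $l\equiv1$.
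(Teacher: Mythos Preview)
Your overall strategy coincides with the paper's: rescale every $h_i$ by $\cutoff$, note that the large--large factors all equal $1$ and the small--small factors equal $v_iv_j$, then integrate out the $k-m$ large variables to obtain a single function raised to the power $k-m$. There are, however, two concrete problems. First, a slip: the small--small product $\prod_{1\le i<j\le m} t_it_j$ is symmetric in the $t_i$ and equals $\prod_{i=1}^m t_i^{m-1}$, not $\prod_{i=1}^m t_i^{m-i}$; ordering the variables cannot change a symmetric expression.

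Second, and this is the real gap, you are conflating two different coordinate systems. The variables you introduce as $t_i=h_i/\cutoff$ are what the paper calls $v_i$; integrating out the large variables in these coordinates leaves an integral over the \emph{ordered simplex} $0<v_1\le\cdots\le v_m\le 1$ with weight $\prod_i v_i^{m-1-\tau}$ against a function $F_m^{k-m}$, where $F_m$ is exactly the piecewise-computed object arising from your range decomposition of the single large-variable integral. The proposition, however, is stated in \emph{ratio} variables $t_i=v_i/v_{i+1}$ for $i<m$ and $t_m=v_m$, which send the simplex onto the unit cube $[0,1]^m$. The weight $\prod_i t_i^{i(m-\tau)-1}$ that you quote in your convergence check arises only after this second substitution, as the combination of $\prod_i v_i^{m-1-\tau}=\prod_j t_j^{j(m-1-\tau)}$ with the Jacobian $\prod_j t_j^{j-1}$; and $\Phi_m$ is $(\tau-1)F_m$ re-expressed in the ratios, which is also how the remaining $(\tau-1)^{k-m}$ is absorbed to leave $(\tau-1)^m$ in \eqref{5.12}. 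Without inserting this ratio substitution you do not reach the form \eqref{5.12}--\eqref{5.15}, and your integrability check at $t_i\downarrow 0$ is being carried out in the wrong coordinate system.
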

\mbox{}

The detailed proof of Proposition \ref{prop5.1} is deferred to Section~\ref{appA}. It uses the basic substitution $v_i=h_i/\cutoff$ in (\ref{5.11}), causing the factor $\cutoff^{k(1-\tau)}$ to emerge, and the special form of $f(x)$ ($=\,\min\{1,x\}$), so that symmetry and factorizations can be exploited. There is, furthermore, an explicit evaluation of the integrals over $v_{m+1},...,v_k$ when $0\leq v_1\leq v_2\leq\cdots\leq v_m\leq1$. A final substitution ($t_i=v_i/v_{i+1}$ for $i=1,...,m-1$ and $t_m=v_m$) then yields integrals over the unit cube $[0,1]^m$.

The form of $I_m$ and $J_m$ in (\ref{5.12}) and (\ref{5.13a}) shows a convenient separation of dependencies, with $J_m$ in \eqref{5.13a} independent of $n$ and $\Phi_m$ in (\ref{5.13})--(\ref{5.15}) independent of $k$. Moreover, the number of integration variables is reduced from $k$ in (\ref{5.11}) to $m$ in (\ref{5.13a}). The remaining integral $J_m$ is not readily computable in closed form. 

From (\ref{5.10}) and Proposition \ref{prop5.1} we get the following result for the average number of $k$-cliques:

\begin{thm}[Precise asymptotics for cliques] \label{thm5.2}
In the rank-1 inhomogeneous random graph with weight density \eqref{5.2}
and connection probability \eqref{eq:phh}, the average number of cliques of size $k \ge 3$ satisfies
\beq \label{5.17}
A_k(n)\sim \Bigl(\!\ba{c} n \\ k \ea\!\Bigr)\,\cutoff^{k(1-\tau)}\,\Bigl[1+\sum_{m=1}^{k-1}\,\Bigl(\!\ba{c} k \\ m \ea\!\Bigr)\,(\tau-1)^m\,m!\,J_m+\Bigl(\frac{\tau-1}{k-\tau}\Bigr)^k\Bigr], 
\eq 
with $J_m$ given in {\rm(\ref{5.13a})}.
\end{thm}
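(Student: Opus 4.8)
\noindent\textbf{Proof plan for Theorem~\ref{thm5.2}.}
This statement is the bookkeeping step that combines the pieces already assembled. The starting point is the exact range-splitting identity \eqref{5.8}, namely $A_k(n)=\binom{n}{k}\sum_{m=0}^{k}\binom{k}{m}I_m$, which records the decomposition of the $k$-fold integral \eqref{5.7} according to how many hidden variables lie below $\cutoff$, together with the symmetry of the integrand. The plan is to substitute the known asymptotics of the $k+1$ contributions $I_m$ into this finite sum, factor out the common power $\cutoff^{k(1-\tau)}$, and identify the resulting limiting constant.

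Concretely, I would first insert the two extreme contributions from \eqref{5.10}: $I_0\sim\cutoff^{k(1-\tau)}$ (all hidden variables $\ge\cutoff$, so each factor $f(h_ih_j/\cutoff^2)$ equals $1$) and $I_k\sim\cutoff^{k(1-\tau)}\bigl(\tfrac{\tau-1}{k-\tau}\bigr)^{k}$ (all hidden variables $\le\cutoff$, i.e.\ the cutoff computation of Section~\ref{sec:leading}). Then I would insert the middle contributions via Proposition~\ref{prop5.1}, $I_m\sim\cutoff^{k(1-\tau)}(\tau-1)^{m}\,m!\,J_m$ for $m=1,\dots,k-1$, with $J_m$ the $n$-independent constant of \eqref{5.13a}; its finiteness is part of Proposition~\ref{prop5.1}, and $J_m\ge0$ because $I_m$ is the integral of a nonnegative integrand. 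Dividing \eqref{5.8} by $\binom{n}{k}\cutoff^{k(1-\tau)}$ and passing to the limit term by term in the finite sum yields
\be
\frac{A_k(n)}{\binom{n}{k}\cutoff^{k(1-\tau)}}\ \longrightarrow\ 1+\sum_{m=1}^{k-1}\binom{k}{m}(\tau-1)^{m}\,m!\,J_m+\Bigl(\frac{\tau-1}{k-\tau}\Bigr)^{k}.
\ee
Since $k\ge3$ forces $\tau<3\le k$, the factor $k-\tau$ is positive and the right-hand side is at least $1$, hence strictly positive; a nonzero limit is exactly what upgrades this convergence to the claimed $\sim$-relation \eqref{5.17}, the exact combinatorial factor $\binom{n}{k}$ passing through unchanged.

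There is essentially no obstacle at the level of Theorem~\ref{thm5.2} itself: the only point worth stating is the elementary fact that finitely many relations $a_i(n)\sim\alpha_i b(n)$ may be added whenever the aggregate constant $\sum_i\alpha_i$ is nonzero (here guaranteed since the $m=0$ term alone contributes $1$). The genuine work is entirely inside Proposition~\ref{prop5.1} --- the substitutions $v_i=h_i/\cutoff$ and $t_i=v_i/v_{i+1}$, the explicit integration over $v_{m+1},\dots,v_k$ on the simplex $0\le v_1\le\cdots\le v_m\le1$, and, crucially, the verification that the exponents appearing in \eqref{5.13a} keep the integrals $J_m$ over $[0,1]^m$ convergent for every $k\ge3$ --- and that proof is deferred to Section~\ref{appA}.
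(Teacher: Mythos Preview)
Your proposal is correct and follows exactly the paper's own route: the paper derives Theorem~\ref{thm5.2} directly from the range-splitting identity \eqref{5.8}, the extreme-term asymptotics \eqref{5.10}, and Proposition~\ref{prop5.1}, which is precisely the bookkeeping you outline. Your added remark that the limiting constant is strictly positive (so that $\sim$ is legitimate) is a useful clarification the paper leaves implicit.
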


Observing that ${n\choose k}\cutoff^{k(1-\tau)}\sim n^{k(3-\tau)/2}\mu^{k(1-\tau)/2}/k!$, we see that Theorem~\ref{thm5.2} confirms and refines Theorem~\ref{thm:main_clique} for the pure power-law case \eqref{5.2}. Below we give further results on the expression in square brackets in \eqref{5.17}.

The representation (\ref{5.12})--(\ref{5.13a}) of $I_m$ is also useful for getting bounds and asymptotics for $I_m$ and $A_k(n)$. For this there is the following result.

\begin{prop} \label{prop5.3}
For $m=1,2,...,k-1$,
\bi{(iii)~}
\ITEM{\rm (i)} $\Phi_m(0,...,0)=0\,,~~\Phi_m(1,...,1)=1\,$,
\ITEM{\rm (ii)} $\Phi_m(t_1,...,t_m)$ increases in any of the $t_i\in[0,1]\,$,
\ITEM{\rm (iii)} $\dfrac{\partial\Phi_m}{\partial t_i}\,(1,...,1)=0\,,~~i=1,...,m\,$,
\ITEM{\rm (iv)} $\dfrac{\partial^2\Phi_m}{\partial t_i\,\partial t_j}\,(1,...,1)={-}(\tau-1)\,\min\{i,j\}\,,~~i,j=1,...,m\,$.
\ei
\end{prop}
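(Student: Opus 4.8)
The plan is to verify all four assertions directly from the closed-form expressions for $\Phi_m$ in \eqref{5.13}--\eqref{5.15}, treating $m=1,2$ separately (they are special cases of the general formula but have slightly different-looking parenthetical factors) and then handling $m\geq3$ uniformly. It is convenient to write, for $m\geq3$,
\beq \label{eq:phi_factored}
\Phi_m(t_1,\ldots,t_m)=t_1t_2^{\tau-1}\cdots t_m^{\tau-1}\,G_m(t_1,\ldots,t_m),
\eq
where $G_m$ is the bracketed sum in \eqref{5.15}, and similarly $\Phi_1(t_1)=t_1\,G_1(t_1)$ and $\Phi_2(t_1,t_2)=t_1t_2^{\tau-1}\,G_2(t_1,t_2)$ with $G_1,G_2$ the brackets in \eqref{5.13},\eqref{5.14}.

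For part (i): $\Phi_m(0,\ldots,0)=0$ is immediate because the prefactor $t_1t_2^{\tau-1}\cdots t_m^{\tau-1}$ vanishes (recall $\tau\in(2,3)$ so all exponents are positive). For $\Phi_m(1,\ldots,1)=1$ one substitutes $t_i=1$ into the bracket and checks that the constants telescope: using $\mu=(\tau-1)/(\tau-2)$, the term $\frac{\tau-1}{(3-\tau)(\tau-2)}-\frac{1}{\tau-2}=\frac{\tau-1-(3-\tau)}{(3-\tau)(\tau-2)}=\frac{2(\tau-1)}{(3-\tau)(\tau-2)}-\ldots$; more cleanly, collect $-\frac{\tau-1}{m+1-\tau}$ with the sum $-\sum_{j=3}^m\frac{\tau-1}{(j+1-\tau)(j-\tau)}$ and use the partial-fraction identity $\frac{1}{(j+1-\tau)(j-\tau)}=\frac{1}{j-\tau}-\frac{1}{j+1-\tau}$ to telescope the sum down to $\frac{1}{3-\tau}-\frac{1}{m+1-\tau}$, whereupon everything combines to $1$. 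The cases $m=1,2$ are a one-line check of the same kind.

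For parts (ii), (iii), (iv) I would differentiate the factored form \eqref{eq:phi_factored}. Write $\Phi_m=P_m\cdot G_m$ with $P_m=t_1\prod_{j\geq2}t_j^{\tau-1}$. Then $\partial_i\Phi_m=P_m(\partial_i\log P_m)G_m+P_m\,\partial_i G_m$. Evaluated at the all-ones point, $P_m=1$, $G_m=1$, $\partial_i\log P_m$ equals $1$ for $i=1$ and $\tau-1$ for $i\geq2$, and one computes $\partial_i G_m(1,\ldots,1)$ from \eqref{5.15}: the $t_1$-derivative of $-\frac{t_1^{\tau-2}}{\tau-2}$ is $-t_1^{\tau-3}$, giving $-1$ at $t_1=1$; for $i\geq2$ the derivative of each monomial $t_2^{3-\tau}\cdots t_{j-1}^{j-\tau}$ or $t_2^{3-\tau}\cdots t_m^{m+1-\tau}$ contributes its exponent, and after the same telescoping used in part (i) one gets $\partial_i G_m(1,\ldots,1)=-1$ for $i=1$ and $-(\tau-1)/1=-(\tau-1)$ for $i\geq 2$ — arranged precisely so that $\partial_i\Phi_m(1,\ldots,1)=\partial_i\log P_m+\partial_i G_m=0$ in every coordinate, which is (iii). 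For (ii), monotonicity, I expect the cleanest route is to show each $\partial_i\Phi_m\geq0$ on $[0,1]^m$: factoring out the (nonnegative) monomial prefactor reduces this to showing a certain polynomial in the $t_j$ with mixed-sign coefficients is nonnegative on the cube; since we already know it vanishes at $t_i=1$ (by (iii), after accounting for the prefactor) and the structure is a difference of a constant and increasing terms, one argues that $\partial_i\Phi_m$ is itself monotone in the remaining variables so its minimum over the cube is attained on a face and ultimately at a corner, where nonnegativity is checked by the telescoping identity. For (iv), differentiate once more: $\partial_j\partial_i\Phi_m$ at the all-ones point splits (by the product rule applied to $P_mG_m$) into $\partial_i\partial_j\log P_m+(\partial_i\log P_m)(\partial_j G_m)+(\partial_j\log P_m)(\partial_i G_m)+\partial_i\partial_j G_m$; the first term vanishes for $i\neq j$ and equals $-(\tau-1)\cdot\mathbf{1}[i\geq2]-0\cdot\mathbf 1[i=1]$ wait — more carefully $\partial_i^2\log P_m=-(\text{exponent of }t_i)$, and the cross terms are products of the known first derivatives; collecting everything and again invoking the telescoping identity for $\partial_i\partial_j G_m(1,\ldots,1)$ yields the stated answer $-(\tau-1)\min\{i,j\}$.

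The main obstacle is part (ii): unlike (i), (iii), (iv), which are finite algebraic identities at a single point that fall out of one partial-fraction telescoping, monotonicity is a statement on the whole cube and the bracket $G_m$ has coefficients of both signs, so naive term-by-term bounds fail. The key structural fact to exploit is that in $\Phi_m$ the variable $t_i$ appears only in $t_i$ itself and in the monomials $t_2^{3-\tau}\cdots t_j^{\cdot}$ for $j\geq i$ — i.e.\ with a nested/upper-triangular dependence — so the sign of $\partial_i\Phi_m$ can be controlled by freezing $t_1,\ldots,t_{i-1}$ and $t_{i+1},\ldots,t_m$, reducing to a one-variable inequality of the form ``constant minus a positive power of $t_i$ times a positive quantity is $\geq0$'', and then checking the worst case (largest $t_i$, i.e.\ $t_i=1$) using (i). I would carry out (i) first (it supplies the boundary value needed everywhere else), then (iii) and (iv) as corollaries of the same telescoping computation, and finally (ii). Throughout, the cases $m=1$ and $m=2$ should be dispatched by direct substitution before doing the general $m\geq3$ argument.
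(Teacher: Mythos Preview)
Your treatment of (i), (iii), and (iv) is essentially the paper's: the same factorization $\Phi_m=P_mG_m$ (the paper writes $\Psi_m$ for your $G_m$, with the factor $\tau-1$ pulled outside), the same partial-fraction telescoping $\frac{1}{(j+1-\tau)(j-\tau)}=\frac{1}{j-\tau}-\frac{1}{j+1-\tau}$, and the same product-rule evaluation at the all-ones point. Those parts are fine.

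The gap is in (ii). Your sketch (``monotone in the remaining variables so its minimum over the cube is attained on a face and ultimately at a corner, where nonnegativity is checked by the telescoping identity'') is vague and, as written, does not work: after factoring out the monomial prefactor from $\partial_i\Phi_m$, what remains does \emph{not} vanish when only $t_i=1$ --- it vanishes only at the full all-ones point --- so your proposed reduction to a one-variable inequality with a single boundary check is ill-posed. The paper's argument is considerably simpler and rests on one observation you do not make: since every monomial subtracted inside the bracket $G_m$ carries strictly positive exponents when $\tau\in(2,3)$, the bracket $G_m$ is \emph{decreasing} in every $t_j$ on $[0,1]^m$, and therefore $G_m(t_1,\ldots,t_m)\geq G_m(1,\ldots,1)=1$ throughout the cube. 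With $G_m\geq 1$ in hand, the paper writes out $\partial_i\Phi_m$ explicitly (separately for $i=1$ and $i\geq 2$), replaces $G_m$ by its lower bound $1$ and each monomial $t_2^{3-\tau}\cdots t_{j-1}^{j-\tau}$ by its upper bound $1$, and the resulting lower bound is exactly $0$ by the same telescoping used in (i). This disposes of (ii) and (iii) simultaneously, since equality in the bound $G_m\geq 1$ occurs precisely at $t_1=\cdots=t_m=1$.
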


The maximality of $\Phi_m$ at $t_1=\cdots=t_m=1$ translates to $h_1=\cdots=h_m=\cutoff$ in the original hidden variables $h_i$, and this shows that for $m=1,...,k-1$ the largest contribution to the integral $I_m$ in (\ref{5.11}) comes from hidden variables $h_1,...,h_m$ that are less than but near $\cutoff$ while the other hidden variables $h_{m+1},...,h_k$ exceed $\cutoff$.

From Proposition \ref{prop5.3} we have the following consequences for the quantities $(\tau-1)^m\,m!\,J_m$ occurring in the series (\ref{5.17}) for $A_k(n)$.

\begin{prop} \label{prop5.4} \mbox{}\\[-5mm]
\bi{(ii)~}
\ITEM{\rm (i)} $(\tau-1)^m\,m!\,J_m$ decreases in $k=3,4,...$ for $m=1,2,...,k-1$.
\ITEM{\rm (ii)} $(\tau-1)^m\,m!\,J_m\leq\Bigl(\dfrac{\tau-1}{m-\tau}\Bigr)^m$ for $m=3,4,...,k-1$.
\ei
\end{prop}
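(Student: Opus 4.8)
\textbf{Proof proposal for Proposition \ref{prop5.4}.}

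The plan is to deduce both statements directly from the structure of the integral $J_m$ in \eqref{5.13a} together with the three qualitative facts about $\Phi_m$ collected in Proposition \ref{prop5.3}, most importantly that $0\le\Phi_m(t_1,\dots,t_m)\le1$ on the unit cube with equality to $1$ only at the corner $t_1=\cdots=t_m=1$ (parts (i) and (ii) of that proposition). For part (i), I would fix $m$ and view the expression $(\tau-1)^m\,m!\,J_m$ as a function of the exponent $k$. The only place $k$ enters \eqref{5.13a} is through the power $\Phi_m^{k-m}$. Since $0\le\Phi_m\le1$ pointwise, increasing $k$ replaces $\Phi_m^{k-m}$ by $\Phi_m^{k+1-m}\le\Phi_m^{k-m}$ pointwise on $[0,1]^m$; the weight $\prod_i t_i^{i(m-\tau)-1}$ is nonnegative and independent of $k$, so the integrand decreases pointwise and hence $J_m$ decreases in $k$. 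One should check that $J_m$ is finite to begin with (so ``decreases'' is meaningful): near the faces $t_i\downarrow0$ the factor $t_i^{i(m-\tau)-1}$ is singular, but $\Phi_m$ vanishes there — from the explicit forms \eqref{5.13}--\eqref{5.15}, $\Phi_m$ carries a factor $t_1 t_2^{\tau-1}\cdots t_m^{\tau-1}$, so $\Phi_m^{k-m}$ supplies enough vanishing to beat the singular weight once $k-m$ is large enough; for the borderline small cases $k=m+1$ one checks integrability by the same explicit bound $\Phi_m\lesssim t_1t_2^{\tau-1}\cdots t_m^{\tau-1}$. This monotonicity argument is essentially immediate once these integrability remarks are in place.

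For part (ii), the clean way is to bound $\Phi_m^{k-m}\le 1$ in \eqref{5.13a}, which gives
\be
(\tau-1)^m\,m!\,J_m\le(\tau-1)^m\,m!\,\il_0^1\cdots\il_0^1\,\prod_{i=1}^m\,t_i^{i(m-\tau)-1}\,dt_1\cdots dt_m.
\ee
The remaining integral factorizes over the $t_i$, and for it to converge we need each exponent $i(m-\tau)-1>-1$, i.e.\ $i(m-\tau)>0$; since $m\ge3>\tau$ this holds for every $i=1,\dots,m$, and $\int_0^1 t_i^{i(m-\tau)-1}\,dt_i=\tfrac{1}{i(m-\tau)}$. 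Hence the product of integrals equals $\prod_{i=1}^m\tfrac1{i(m-\tau)}=\tfrac1{m!\,(m-\tau)^m}$, and multiplying by $(\tau-1)^m\,m!$ yields exactly $\bigl(\tfrac{\tau-1}{m-\tau}\bigr)^m$. This is why the bound in (ii) has precisely that form, and why the hypothesis $m\ge3$ (equivalently $m>\tau$, as $\tau<3$) is needed — it is what makes all the one-dimensional integrals converge and, incidentally, matches the constant $(\frac{\tau-1}{k-\tau})^k$ appearing for $I_k$ in \eqref{5.10} with $m$ in place of $k$.

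There is no serious obstacle here: once Proposition \ref{prop5.3}(i)--(ii) is granted, both parts reduce to the elementary observation that powering a function valued in $[0,1]$ only shrinks it, plus a factorized beta-type integral. The one point requiring a little care, and the closest thing to a ``hard part'', is the integrability bookkeeping near the faces $t_i=0$: one must confirm that $J_m$ is finite for all admissible $(m,k)$ with $1\le m\le k-1$, using the factor $t_1t_2^{\tau-1}\cdots t_m^{\tau-1}$ in $\Phi_m$ (and its $(k-m)$-th power) to dominate the singular weights $t_i^{i(m-\tau)-1}$ when $m<\tau$, i.e.\ for $m=1,2$. For $m=1$ and $m=2$ the statement (ii) is not claimed, so only part (i) is at stake there, and one checks directly from \eqref{5.13}--\eqref{5.14} that $\Phi_1^{k-1}$ and $\Phi_2^{k-2}$ render $J_1,J_2$ finite for all $k\ge m+1$.
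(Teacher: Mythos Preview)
Your proposal is correct and follows essentially the same approach as the paper: both parts rest on the pointwise bound $0\le\Phi_m\le1$ from Proposition~\ref{prop5.3}, with (i) following since $\Phi_m^{k-m}$ decreases in $k$ and (ii) from bounding $\Phi_m^{k-m}\le1$ and computing the factorized integral $\prod_{i=1}^m\int_0^1 t_i^{i(m-\tau)-1}\,dt_i=\tfrac{1}{m!(m-\tau)^m}$. You actually supply more care than the paper does on the integrability of $J_m$ near the faces $t_i=0$ for $m=1,2$; the paper's proof is terser but identical in substance.
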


The series in (\ref{5.17}) for $A_k(n)$ has terms that are bounded by ${k\choose m}(\frac{\tau-1}{m-\tau})^m$ for $m=3,4,...,k-1$. The latter quantity reaches its maximum over $m=3,4,...,k-1$ at $m$ near $m_0:=\sqrt{k(\tau-1)/\e}$. Using a Gaussian approximation of ${k\choose m}(\frac{\tau-1}{m-\tau})^m$ near $m_0$, see Section~\ref{appA} for details, we get the following result.

\begin{thm}[Asymptotic order of cliques] \label{thm5.5}
In the rank-1 inhomogeneous random graph with weight density \eqref{5.2}
and connection probability \eqref{eq:phh}, the average number of cliques of size $k \ge 3$ satisfies
\beq \label{5.18}
A_k(n)=O\,\Bigl[\Bigl(\!\ba{c} n \\ k \ea\!\Bigr)\,\cutoff^{k(1-\tau)}\,{\rm e}^{2\sqrt{k(\tau-1)/e}}\Bigr]. 
\eq 
\end{thm}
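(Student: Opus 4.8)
The plan is to bound the bracketed factor in \eqref{5.17} term-by-term and show the sum is $O(\mathrm{e}^{2\sqrt{k(\tau-1)/e}})$. By Theorem~\ref{thm5.2},
\[
A_k(n)\sim\Bigl(\!\ba{c} n\\ k\ea\!\Bigr)\,\cutoff^{k(1-\tau)}\Bigl[1+\sum_{m=1}^{k-1}\Bigl(\!\ba{c} k\\ m\ea\!\Bigr)(\tau-1)^m\,m!\,J_m+\Bigl(\frac{\tau-1}{k-\tau}\Bigr)^k\Bigr],
\]
so it suffices to show the quantity in square brackets is $O(\mathrm{e}^{2\sqrt{k(\tau-1)/e}})$. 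The first term ($1$) and the last term ($(\frac{\tau-1}{k-\tau})^k$, which is in fact $o(1)$ for $k$ large since $\tau<3$) are trivially absorbed. For the sum, I would split the range of $m$ as $m\in\{1,2\}$ versus $m\in\{3,\dots,k-1\}$. The two terms $m=1,2$ contribute ${k\choose 1}(\tau-1)J_1$ and ${k\choose 2}(\tau-1)^2\,2\,J_2$, which are $O(k)$ and $O(k^2)$ respectively (using that $J_1,J_2$ are finite constants independent of $k$, being integrals over $[0,1]$ and $[0,1]^2$ of bounded integrands by Proposition~\ref{prop5.3}(i)--(ii)), and $O(k^2)=O(\mathrm{e}^{2\sqrt{k(\tau-1)/e}})$.

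For $m=3,\dots,k-1$ I would invoke Proposition~\ref{prop5.4}(ii), which gives $(\tau-1)^m\,m!\,J_m\le\bigl(\frac{\tau-1}{m-\tau}\bigr)^m$, so that
\[
\sum_{m=3}^{k-1}\Bigl(\!\ba{c} k\\ m\ea\!\Bigr)(\tau-1)^m\,m!\,J_m\le\sum_{m=3}^{k-1}\Bigl(\!\ba{c} k\\ m\ea\!\Bigr)\Bigl(\frac{\tau-1}{m-\tau}\Bigr)^m.
\]
Now I would estimate $a_m:={k\choose m}\bigl(\frac{\tau-1}{m-\tau}\bigr)^m$. Using ${k\choose m}\le k^m/m!$ and Stirling $m!\ge (m/e)^m$, one gets $a_m\le\bigl(\frac{e k(\tau-1)}{m(m-\tau)}\bigr)^m$, which for $m$ large compared to $\tau$ behaves like $\bigl(\frac{e k(\tau-1)}{m^2}\bigr)^m$; taking logs, $\log a_m\approx m\log\frac{ek(\tau-1)}{m^2}$, maximised at $m_0=\sqrt{k(\tau-1)/e}$ with maximal value $2m_0=2\sqrt{k(\tau-1)/e}$. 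Thus $\max_m a_m\le C\,\mathrm{e}^{2\sqrt{k(\tau-1)/e}}$ up to polynomial corrections. To control the full sum I would show the $a_m$ decay geometrically away from $m_0$: writing $a_{m+1}/a_m=\frac{k-m}{m+1}\cdot\frac{\tau-1}{m+1-\tau}\bigl(\frac{m-\tau}{m+1-\tau}\bigr)^m$ and using a Gaussian (Laplace) approximation of $\log a_m$ around $m_0$ — expanding $\log a_m$ to second order, the curvature is of order $1/m_0$ in the appropriate scaling — one finds $\sum_m a_m\le (\text{poly in }k)\cdot\mathrm{e}^{2\sqrt{k(\tau-1)/e}}$. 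Since any polynomial factor is itself $o(\mathrm{e}^{\varepsilon\sqrt{k}})$, and more carefully the Laplace sum contributes only a factor $O(\sqrt{m_0})=O(k^{1/4})$, this is comfortably $O(\mathrm{e}^{2\sqrt{k(\tau-1)/e}})$ in the $O$-sense (absorbing constants). Combining the $m\le 2$ and $m\ge 3$ bounds with the trivial terms completes the proof.

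The main obstacle is the Laplace-type estimate of $\sum_{m=3}^{k-1}{k\choose m}(\frac{\tau-1}{m-\tau})^m$: one must be careful that the summand is genuinely sharply peaked at $m_0$ (so that the sum is comparable to the maximum term up to a sub-exponential factor), handling separately the boundary regimes $m$ close to $3$ and $m$ close to $k$, and checking that the discrete second-difference of $\log a_m$ is negative and of the right order throughout a neighbourhood of $m_0$. The regime $m$ close to $k$ needs the observation that ${k\choose m}$ is small there while $(\frac{\tau-1}{m-\tau})^m\le(\frac{\tau-1}{k-\tau})^k$ is exponentially small; the regime $m$ small is dominated by the explicit $m=3$ term. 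None of this is deep, but it is the part where the bookkeeping must be done with some care, and it is exactly the computation referred to as being carried out in Section~\ref{appA}.
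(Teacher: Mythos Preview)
Your approach is essentially identical to the paper's: split off $m\le 2$ as $O(k^2)$, apply Proposition~\ref{prop5.4}(ii) for $m\ge 3$, and then run a Laplace/Gaussian analysis on $\binom{k}{m}\bigl(\tfrac{\tau-1}{m-\tau}\bigr)^m$ around $m_0=\sqrt{k(\tau-1)/e}$ with curvature $\sim -2/m_0$, exactly as in Section~\ref{appA}. One small correction: $J_1,J_2$ are \emph{not} ``finite constants independent of $k$'' with ``bounded integrands'' --- the integrands carry the factor $t_i^{i(m-\tau)-1}$ which blows up at $0$, and $J_m$ depends on $k$ through the exponent $k-m$ of $\Phi_m$; the right justification (and the one the paper uses) is Proposition~\ref{prop5.4}(i), which gives that $J_1,J_2$ are decreasing in $k$ and hence bounded by their values at $k=3$.
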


Theorem \ref{thm5.5} shows that the expression in square brackets in \eqref{5.17} grows subexponentially in $k$, which is relevant for large cluster sizes $k$. 

\subsection{Precise asymptotics for cycles} \label{subsec5.2}
Using \eqref{defcycle}, the average number $C_k(n)$ of $k$-cycles with $k\geq3$ equals
\begin{align} \label{5.21}
C_k(n) & =  \dfrac{1}{2k}\,\Bigl(\!\ba{c} n \\ k \ea\!\Bigr)\,k!\,\il_1^{\infty}\cdots\il_1^{\infty}\,\rho(h_1)\cdots\rho(h_k) \nonumber\\
&  \times\:f\Bigl(\frac{h_1h_2}{\cutoff^2}\Bigr)\,f\Bigl(\frac{h_2h_3}{\cutoff^2}\Bigr) \cdots f\Bigl(\frac{h_{k-1}h_k}{\cutoff^2}\Bigr)\,f\Bigl(\frac{h_kh_1}{\cutoff^2}\Bigr) dh_1\,dh_2\cdots dh_{k-1}\,dh_k. 
\end{align}
The integral in (\ref{5.21}) is the probability that a particular set of vertices $\{i_1,i_2,...,i_{k-1},i_k\}$ constitutes a $k$-cycle $i_1\pr i_2\pr \cdots\pr i_{k-1}\pr i_k\pr i_1$. 
In (\ref{5.21}), we now allow a general $f$ from the class introduced in \cite{hofstad2017b}.
The main result for $C_k(n)$ reads as follows.

\begin{thm}[Precise asymptotics for cycles] \label{thm5.6}
In the rank-1 inhomogeneous random graph with weight density \eqref{5.2}
and general class of connection probabilities \eqref{eq:phhgeneral}, the average number of cycles of size $k \ge 3$ scales as 
\beq \label{5.22}
C_k(n)
\sim \cutoff^{k(1-\tau)}(\tau-1)^k\,\frac{1}{2k}\,\Bigl(\!\ba{c} n \\ k \ea\!\Bigr)\,k!\,\il_{-A}^A\cdots \il_{-A}^A\,F(\C \bft)\,d\bft, 
\eq
where $A={\rm log}\,\cutoff$,
\beq \label{5.23}
F(u_1,...,u_k)=j(u_1)\,j(u_2)\cdots j(u_k), ~~~~~~u_1,...,u_k\in\dR, 
\eq
with
\beq \label{5.24}
j(u)={\rm e}^{-\frac12(\tau-1)u}\,f({\rm e}^u), ~~~~~~u\in\dR, 
\eq
and $\C$ and $\bft$ are the circulant $k\times k$-matrix and $k$-vector
\beq \label{5.25}
\C=\left[\ba{lllllllllll}
1 & 1 & 0 & 0 & & & & & 0 & 0 & 0 \\
0 & 1 & 1 & 0 & & & & & 0 & 0 & 0 \\
& & & & & & & & & & \\
& & & & & & & & & & \\
& & & & & & & & & & \\
& & & & & & & & & & \\
& & & & & & & & & & \\
& & & & & & & & & & \\
0 & 0 & 0 & 0 & & & & & 1 & 1 & 0 \\
0 & 0 & 0 & 0 & & & & & 0 & 1 & 1 \\
1 & 0 & 0 & 0 & & & & & 0 & 0 & 1
\ea\right], ~~~~~~\bft=\left[\ba{c}
t_1 \\
t_2 \\
\mbox{} \\
\mbox{} \\
\mbox{} \\
\mbox{} \\
\mbox{} \\
\mbox{} \\
t_{k-2} \\
t_{k-1} \\
t_k
\ea\right]. 
\eq
\end{thm}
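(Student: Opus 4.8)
\medskip

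The plan is to start from the integral expression \eqref{5.21} for $C_k(n)$ and substitute $h_i = \cutoff\,\me^{u_i}$, which is the natural change of variables matching the logarithmic scale $A = \log\cutoff$ and turning the density $\rho(h_i)\,dh_i = (\tau-1)h_i^{-\tau}\,dh_i$ into $(\tau-1)\cutoff^{1-\tau}\me^{(1-\tau)u_i}\,du_i$. Collecting the $k$ copies of $\cutoff^{1-\tau}$ and the $k$ factors $\tau-1$ yields the prefactor $\cutoff^{k(1-\tau)}(\tau-1)^k$ appearing in \eqref{5.22}. Each edge factor $f(h_ih_{i+1}/\cutoff^2) = f(\me^{u_i+u_{i+1}})$; pairing the exponential weights with the edge factors, one writes $\me^{(1-\tau)u_i} = \me^{-\frac12(\tau-1)u_i}\cdot\me^{-\frac12(\tau-1)u_i}$ and distributes one half-weight to each of the two edges incident to vertex $i$. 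Since the cycle is $2$-regular, every vertex contributes exactly two half-weights, so each edge $\{i,i+1\}$ collects a full factor $\me^{-\frac12(\tau-1)u_i}\me^{-\frac12(\tau-1)u_{i+1}}f(\me^{u_i+u_{i+1}})$. Writing $j(u) = \me^{-\frac12(\tau-1)u}f(\me^u)$ as in \eqref{5.24} does not quite factor this, because $j$ wants its argument to be a single $u$, not $u_i+u_{i+1}$; the correct bookkeeping is to note that the integrand, after the substitution, equals $\prod_{i=1}^k g(u_i+u_{i+1})$ with $g(s) = \me^{-\frac12(\tau-1)s}f(\me^s)$, and then to introduce new variables $s_i = u_i + u_{i+1}$ (indices mod $k$). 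This linear change of variables is exactly $\bft\mapsto\C\bft$ with $\C$ the circulant matrix in \eqref{5.25}, whence $\prod_i g(s_i) = F(\C\bft)$ with $F$ as in \eqref{5.23}–\eqref{5.24} (renaming $u_i$ as $t_i$ and $g$ as $j$), and the integration region in the $u$-variables, $h_i\in[1,\infty)$ i.e. $u_i\in[0,\infty)$, must be tracked through this linear map.

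\medskip

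The delicate point — and where the $\sim$ rather than an exact equality enters — is the treatment of the integration limits. The original range is $h_i\in[1,\infty)$, i.e. $u_i\in[0,\infty)$, but \eqref{5.22} integrates each $t_i$ over the symmetric window $[-A,A] = [-\log\cutoff,\log\cutoff]$. To justify replacing one by the other asymptotically one argues: (a) the integrand $j(u)$ is integrable at $+\infty$ because $f(\me^u)\to 1$ and $\me^{-\frac12(\tau-1)u}$ decays (here $\tau>1$), and the mass of $\prod_i j$ outside the cube $[-A,A]^k$ is negligible relative to the total as $A\to\infty$; (b) extending the lower limit from $0$ (or rather from whatever the image of $[0,\infty)^k$ under the linear map is) down to $-A$ adds only a vanishing fraction because near $u\to-\infty$ one has $f(\me^u)\sim\me^u$ by \eqref{5.5}, so $j(u)\sim\me^{(1-\frac12(\tau-1))u} = \me^{\frac12(3-\tau)u}$, which for $\tau<3$ is integrable at $-\infty$ as well; hence the "missing" or "extra" corners of the cube contribute $O$ of something exponentially small in $A$, i.e. a power of $\cutoff$ smaller than the main term. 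One must also check that the Jacobian of $\bft\mapsto\C\bft$ is $\pm1$ or at worst $\det\C$, a bounded nonzero constant when $k$ is odd and $=0$ when $k$ is even — but the change of variables here is used only to rewrite the integrand as $F(\C\bft)$ over a fixed box, not to invert $\C$, so the singularity of $\C$ for even $k$ is not yet an obstruction at this stage (it becomes one only in Theorems \ref{thm5.7}–\ref{thm5.8} when the box integral itself is evaluated).

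\medskip

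The main obstacle I expect is the uniform control of the tail/corner estimates in (a) and (b) above: one needs that for the specific $f$ in the admissible class \eqref{5.5}, the contribution of the region where some $|u_i|>A$ is $o(1)$ times the integral over $[-A,A]^k$, and this requires knowing that the box integral $\int_{-A}^A\cdots\int_{-A}^A F(\C\bft)\,d\bft$ does not itself decay faster than the discarded tails. Since $j(u)$ behaves like $\me^{\frac12(3-\tau)u}$ as $u\to-\infty$ and like $\me^{-\frac12(\tau-1)u}$ as $u\to+\infty$, the one-dimensional integral $\int_{-A}^A j$ converges to a finite constant, so the box integral is $\Theta(1)$ when $\C$ is invertible and the analysis will show it grows like $A^{?}$ (a power of $\log\cutoff$) when $\C$ is singular — in either case polynomially in $A$, hence dominating the exponentially-small discarded tails. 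Assembling these estimates, together with the substitution and the Jacobian bookkeeping, gives \eqref{5.22}; the finer evaluation of the remaining box integral, and in particular the even/odd dichotomy driven by $\det\C = 1-(-1)^k$, is then carried out separately in the subsequent theorems. \qed
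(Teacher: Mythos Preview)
Your overall strategy matches the paper's: substitute $h_i=\cutoff\,\me^{t_i}$, pull out the prefactor $\cutoff^{k(1-\tau)}(\tau-1)^k$, and recognise the integrand as $\prod_i j(t_i+t_{i+1})=F(\C\bft)$. Your device of splitting each vertex weight $\me^{(1-\tau)t_i}$ into two half-weights and distributing one to each incident edge is a clean way to see why $j(u)=\me^{-\frac12(\tau-1)u}f(\me^u)$ emerges; the paper does the same thing more mechanically via the intermediate $g(x)=x^{-\tau/2}f(x)$.

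There is, however, a concrete bookkeeping error on the integration limits. With $h_i=\cutoff\,\me^{u_i}$, the condition $h_i\ge1$ reads $u_i\ge-\log\cutoff=-A$, \emph{not} $u_i\ge0$. So after the substitution the range is already $u_i\in[-A,\infty)$, and the only truncation needed to reach $[-A,A]^k$ is at the upper end $u_i=A$ (equivalently $h_i=\cutoff^2$), which is precisely the paper's single approximation step. Your step~(b), extending ``from $0$ down to $-A$'', addresses a phantom problem. Worse, if the range really were $[0,\infty)^k$ your argument would fail in substance: integrability of $j$ at $-\infty$ tells you only that the added mass is \emph{bounded}, not that it is $o(1)$ relative to the box integral. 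Indeed, for odd $k$ both $\int_{[0,\infty)^k}F(\C\bft)\,d\bft$ and $\int_{[-A,A]^k}F(\C\bft)\,d\bft$ converge as $A\to\infty$, but to \emph{different} finite values (the image $\C([0,\infty)^k)$ is a proper cone inside $[0,\infty)^k$, not all of $\dR^k$), so swapping one for the other would change the constant. Once you correct the lower limit to $-A$, your step~(a) alone --- exponential decay of $j$ at $+\infty$ against a box integral that is at worst polynomial in $A$ --- suffices, and the proof lines up exactly with the paper's.
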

\mbox{} \\

The proof of Theorem \ref{thm5.6}, detailed in Section~\ref{appB}, uses again the substitution $v_i=h_i/\cutoff$, yielding a factor $\cutoff^{k(1-\tau)}$ outside the integral in (\ref{5.21}) with an integrand of the form \begin{equation}g(v_1v_2)\,g(v_2v_3)\cdots g(v_{k-1}v_k)\,g(v_kv_1).\end{equation} For such an integrand, it is natural to further substitute $t_i={\rm log}\,v_i$, linearizing the arguments of the $g$-functions with the linear algebra of circulant matrices presenting itself.

As to evaluating the remaining integral in Theorem \ref{thm5.6}, we note that
\beq \label{5.26}
{\rm det}(\C)=\left\{\ba{llll}
2,  & ~k~{\rm odd}, \\
0,  & ~k~{\rm even}, 
\ea\right.
\eq
while, due to the properties of $f$ in (\ref{5.5}), the function $F$ is absolutely integrable over $\bfu\in\dR^k$, see (\ref{5.23})--(\ref{5.24}). Thus, for odd $k$, the integral in (\ref{5.22}) remains finite as $A={\rm log}\,\cutoff\pr\infty$, and basic calculus yields the following result.
\begin{thm}[Specific asymptotics for odd cycles] \label{thm5.7}
In the rank-1 inhomogeneous random graph with weight density \eqref{5.2}
and general class of connection probabilities \eqref{eq:phh}, the average number of cycles of odd size $k \ge 3$ scales as 
\beq \label{5.27}
C_k(n)\sim \cutoff^{k(1-\tau)}(\tau-1)^k\,\frac{1}{4k}\, \Bigl(\!\ba{c} n \\ k \ea\!\Bigr)\,k!\,\left( \il_0^{\infty}\,x^{-\frac12(\tau+1)}\,f(x)\,dx\right)^k. 
\eq
\end{thm}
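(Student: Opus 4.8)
The plan is to start from the integral representation \eqref{5.22} in Theorem \ref{thm5.6} and carry out the change of variables induced by the circulant matrix $\C$. Since $k$ is odd, \eqref{5.26} gives $\det(\C)=2\neq 0$, so $\C$ is invertible and the substitution $\bfu=\C\bft$ is a bona fide linear bijection of $\dR^k$ onto itself with $d\bft = |\det(\C)|^{-1}\,d\bfu = \tfrac12\,d\bfu$. The first step is therefore to rewrite
\beq
\il_{-A}^A\cdots\il_{-A}^A F(\C\bft)\,d\bft = \frac12 \int_{\C([-A,A]^k)} F(\bfu)\,d\bfu,
\eq
and observe that as $A={\rm log}\,\cutoff\pr\infty$ the region $\C([-A,A]^k)$ exhausts all of $\dR^k$. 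Because $F(\bfu)=\prod_{i=1}^k j(u_i)$ with $j(u)={\rm e}^{-\frac12(\tau-1)u}f({\rm e}^u)$, and because the properties \eqref{5.5} of $f$ (namely $f(x)/x\to1$ as $x\downarrow0$ and $f(x)\to1$ as $x\to\infty$, with $\tau\in(2,3)$) guarantee that $j$ decays exponentially as $u\to\pm\infty$, $F$ is absolutely integrable over $\dR^k$; hence by dominated convergence the truncated integral converges to $\int_{\dR^k}F(\bfu)\,d\bfu = \bigl(\int_{\dR} j(u)\,du\bigr)^k$.

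The second step is to translate the one-dimensional integral $\int_{\dR}j(u)\,du$ back to the $x$-variable via $x={\rm e}^u$, $du = dx/x$, which gives
\beq
\int_{\dR} j(u)\,du = \int_{\dR} {\rm e}^{-\frac12(\tau-1)u} f({\rm e}^u)\,du = \int_0^\infty x^{-\frac12(\tau-1)} f(x)\,\frac{dx}{x} = \int_0^\infty x^{-\frac12(\tau+1)} f(x)\,dx,
\eq
matching the integral in \eqref{5.27}. One should note here that this integral is finite exactly because $\tau\in(2,3)$: near $x=0$, $f(x)\sim x$ so the integrand behaves like $x^{1-\frac12(\tau+1)}=x^{\frac12(1-\tau)}$ with $\frac12(1-\tau)>-1$, and near $x=\infty$, $f(x)\to1$ so the integrand behaves like $x^{-\frac12(\tau+1)}$ with $\frac12(\tau+1)>1$. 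Combining the two steps, the integral in \eqref{5.22} tends to $\frac12\bigl(\int_0^\infty x^{-\frac12(\tau+1)}f(x)\,dx\bigr)^k$, and substituting this limit into \eqref{5.22} produces the extra factor $\frac12$ that turns the prefactor $\frac{1}{2k}$ into $\frac{1}{4k}$, yielding exactly \eqref{5.27}.

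The only delicate point — and the main thing to be careful about rather than a genuine obstacle — is justifying the interchange of limit ($A\to\infty$) and integration, i.e. controlling the truncation error. The clean way is to use the explicit exponential bounds on $j$: since $f$ is bounded (by $1$ in the $\min$ case, and bounded in general by the class \eqref{5.5}) and $f(x)\le x$, one has $|j(u)|\le {\rm e}^{-\frac12(\tau-1)u}$ for $u\ge0$ and $|j(u)|\le {\rm e}^{\frac12(3-\tau)u}$ for $u\le0$, both integrable tails; then $F$ is dominated by a fixed $L^1(\dR^k)$ function on all of $\dR^k$, the image $\C([-A,A]^k)$ increases to $\dR^k$, and dominated convergence applies directly. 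The remaining manipulations (the linear change of variables, the Jacobian $\tfrac12$, the $x={\rm e}^u$ substitution) are elementary calculus as the theorem statement already advertises, so no further structural input is needed beyond Theorem \ref{thm5.6} and the determinant formula \eqref{5.26}. \qed
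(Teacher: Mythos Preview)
Your proof is correct and follows essentially the same route as the paper: both start from Theorem~\ref{thm5.6}, use $\det(\C)=2$ for odd $k$ to perform the linear substitution $\bfu=\C\bft$, invoke the exponential decay of $j$ (from \eqref{5.5}) to justify passing to the limit $A\to\infty$ via absolute integrability of $F$ over $\dR^k$, and finish with the substitution $x={\rm e}^u$. The only cosmetic difference is that the paper makes the exhaustion argument explicit by noting $\C([-A,A]^k)\supset[-\delta A,\delta A]^k$ for some $\delta>0$, whereas you appeal directly to dominated convergence on the increasing family of images; both are equally valid.
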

\mbox{}

The integral in (\ref{5.27}) is finite, and can be evaluated in closed form for the standard choices of $f$:
\begin{align} \label{A41}
\il_0^{\infty}\,x^{-\frac12(\tau+1)}\,\min\{1,x\}\,dx&=\frac{4}{(3-\tau)(\tau-1)}, \\
 \label{A42}
\il_0^{\infty}\,x^{-\frac12(\tau+1)}\,\frac{x}{1+x}\,dx&=\frac{\pi}{\sin(\tfrac{\pi}{2}\,(\tau-1))}, \\\label{A43}
\il_0^{\infty}\,x^{-\frac12(\tau+1)}(1-{\rm e}^{-x})\,dx&=\frac{\Gamma(\tfrac12(3-\tau))}{(\tfrac12(\tau-1))}. 
\end{align}

\begin{remark}
Choose $f(x)=\min\{1,x\}$ and use Stirling's formula to approximate 
%\beq \label{5.27a}
$
{n\choose k}k!\sim n^k {\rm e}^{-k^2/2n}
$
%\eq
with validity range $k=o(n^{2/3})$. Then \eqref{5.27} gives
\beq \label{5.27b}
C_k(n)\sim  n^{k(3-\tau)/2}\mu^{k(1-\tau)/2} \Big(\frac{4}{3-\tau}\Big)^k\frac{1}{4k}{\rm e}^{-k^2/2n}.
\eq
\end{remark}

The situation for even $k$ is more delicate, due to singularity of the matrix $\C$. In Section~\ref{appB}, the spectral structure of $\C$ is examined. It appears that $\C$ is diagonizable, with the $k$-DFT vectors as eigenvectors, and precisely one eigenvalue 0, viz.\ the one corresponding to the eigenvector
\beq \label{5.28}
\bfc=\frac{1}{\sqrt{k}}\,({-}1,1,{-}1,1,...,{-}1,1)^T. 
\eq
The integration over $\bft$ in (\ref{5.22}) should now be split into a 1-dimensional integration in the direction of $\bfc$ and a $(k-1)$-dimensional integration over the orthogonal complement $L$ of $\bfc$. The integration over $L$ yields a finite result as $A\pr\infty$, due to invertibility of $\C$ on $L$ and absolute integrability of $F(\bfu)$, $\bfu\in\dR^k$. The integration in the direction of $\bfc$ yields a factor $2A\,\sqrt{k}$. By appropriately representing the integration over $L$ using a delta function, the integral over $L$ can be given in closed form, see Section~\ref{appB}. The final result is as follows.

\begin{thm}[Specific asymptotics for even cycles] \label{thm5.8}
In the rank-1 inhomogeneous random graph with weight density \eqref{5.2}
and general class of connection probabilities \eqref{eq:phhgeneral}, the average number of cycles of even size $k \ge 4$ scales as 
\beq \label{5.29}
C_k(n)\sim (\cutoff^{k(1-\tau)}\,{\rm log}\,\cutoff)(\tau-1)^k\,\frac1k\,\Bigl(\!\ba{c} n \\ k \ea\!\Bigr)\,k! \,\il_{-\infty}^{\infty}\,|J(v)|^k\,dv, 
\eq
where
\beq \label{5.30}
J(v)=\il_0^{\infty}\,x^{-2\pi iv-\frac12(\tau+1)}\,f(x)\,dx, ~~~~~~v\in\dR. 
\eq
\end{thm}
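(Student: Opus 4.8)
Since $k$ is even, the circulant $\C$ in Theorem~\ref{thm5.6} is singular. Its eigenvalues are $\lambda_\ell=1+\omega^\ell$ with $\omega=\e^{2\pi i/k}$, $\ell=0,\dots,k-1$, the only vanishing one being $\lambda_{k/2}=0$; hence $\ker\C={\rm span}(\bfc)$ for the unit vector $\bfc$ of \eqref{5.28}, the range of $\C$ is the hyperplane $L:=\bfc^{\perp}$, and $\C$ restricts to a bijection $\C|_L\colon L\to L$ with $|\det(\C|_L)|=\prod_{\ell\neq k/2}(1+\omega^\ell)=k$ (a short root-of-unity computation). The plan is to insert this structure into the representation
\[
C_k(n)\sim\cutoff^{k(1-\tau)}(\tau-1)^k\,\tfrac1{2k}\,{n\choose k}\,k!\int_{[-A,A]^k}F(\C\bft)\,d\bft,\qquad A=\log\cutoff,
\]
from Theorem~\ref{thm5.6} and to compute the $A\to\infty$ asymptotics of the $k$-fold integral.

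First I would write $\bft=s\bfc+\bfw$ with $s\in\dR$ and $\bfw\in L$; this decomposition is orthogonal, so $d\bft=ds\,d\sigma(\bfw)$ with $d\sigma$ the $(k-1)$-dimensional surface measure on $L$, and, crucially, $F(\C\bft)=F(\C\bfw)$ is independent of $s$. For fixed $\bfw$ the slice $\{s:s\bfc+\bfw\in[-A,A]^k\}$ is an interval whose length is at most $2\sqrt k\,A$ and tends to $2\sqrt k\,A$ as $A\to\infty$ (intersect the $k$ slabs $|sc_i+w_i|\le A$ with $c_i=\pm1/\sqrt k$). Since $F\ge0$ and $\bfw\mapsto F(\C\bfw)$ is integrable on $L$ (see below), dominated convergence gives
\[
\int_{[-A,A]^k}F(\C\bft)\,d\bft\ \sim\ 2\sqrt k\,A\int_L F(\C\bfw)\,d\sigma(\bfw)\qquad(A\to\infty).
\]

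Next I would evaluate the hyperplane integral explicitly. Substituting $\bfz=\C\bfw$ and using $|\det(\C|_L)|=k$ gives $\int_L F(\C\bfw)\,d\sigma(\bfw)=\frac1k\int_L\prod_{i=1}^k j(z_i)\,d\sigma(\bfz)$ with $j$ from \eqref{5.24}. The sign-flip $z_i\mapsto(-1)^iz_i$ is an isometry carrying $L=\{\sum_i(-1)^iz_i=0\}$ onto $\{\sum_i y_i=0\}$ and the integrand into $\prod_i\phi_i(y_i)$ with $\phi_i(y)=j((-1)^iy)$. Integrating a product $\prod_i\phi_i(y_i)$ against surface measure over $\{\sum_i y_i=0\}$ equals $\sqrt k\,(\phi_1*\cdots*\phi_k)(0)$, and by Fourier inversion $(\phi_1*\cdots*\phi_k)(0)=\int_\dR\prod_i\widehat{\phi_i}(v)\,dv$. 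The substitution $x=\e^u$ in \eqref{5.24} identifies $\widehat{j}(v)=\int_0^\infty x^{-2\pi iv-\frac12(\tau+1)}f(x)\,dx=J(v)$, and since $f$ is real the transform of $y\mapsto j(-y)$ is $\widehat{j}(-v)=\overline{J(v)}$; as $k/2$ of the indices are even and $k/2$ odd, $\prod_i\widehat{\phi_i}(v)=J(v)^{k/2}\overline{J(v)}^{k/2}=|J(v)|^k$. Hence $\int_L\prod_i j(z_i)\,d\sigma(\bfz)=\sqrt k\int_\dR|J(v)|^k\,dv$, so $\int_L F(\C\bfw)\,d\sigma(\bfw)=\frac1{\sqrt k}\int_\dR|J(v)|^k\,dv$. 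Combining the last three displays with Theorem~\ref{thm5.6} and simplifying $2\sqrt k\,A\cdot\frac1{\sqrt k}\cdot\frac1{2k}=\frac Ak$ with $A=\log\cutoff$ yields \eqref{5.29}.

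The supporting facts are routine: \eqref{5.5} together with $\tau\in(2,3)$ makes $j(u)=\e^{-\frac12(\tau-1)u}f(\e^u)$ decay exponentially as $u\to\pm\infty$, so $j\in L^1\cap L^2$, $J$ is bounded with $J\in L^2$, hence $\int_\dR|J|^k<\infty$ for $k\ge2$, the product $\prod_i\widehat{\phi_i}=|J|^k$ is integrable (legitimising the Fourier inversion), and $\bfw\mapsto F(\C\bfw)$ is integrable on $L$ (the domination needed in the truncation step). The step demanding the most care is making the delta-function heuristic behind the hyperplane integral rigorous: the manipulations above can be read as using $\delta(y)=\int_\dR\e^{2\pi ivy}\,dv$ inside a non-absolutely-convergent oscillatory integral, so I would instead carry out the hyperplane integral over $k-1$ free coordinates and apply Plancherel one variable at a time (or regularise $\delta$ and pass to the limit using the exponential decay of $j$); the truncation step itself is conceptually easy once integrability of $F(\C\,\cdot)$ on $L$ is in hand, and the remaining point is the determinant identity $|\det(\C|_L)|=k$.
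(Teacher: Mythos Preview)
Your argument is correct and follows essentially the same route as the paper: decompose $\bft$ along $\bfc$ and its orthogonal complement $L$, extract the factor $2\sqrt{k}\,A$ from the kernel direction, use $|\det(\C|_L)|=k$ to change variables on $L$, and evaluate the hyperplane integral via Fourier analysis to produce $\sqrt{k}\int_{\dR}|J(v)|^k\,dv$. The only cosmetic difference is that you pass through the sign-flip isometry and the convolution identity $\int_{\{\sum y_i=0\}}\prod\phi_i\,d\sigma=\sqrt{k}\,(\phi_1*\cdots*\phi_k)(0)$ before invoking Fourier inversion, whereas the paper inserts the delta-function representation $\delta(\bfu^T\bfc)=\int_{\dR}\e^{2\pi is\bfu^T\bfc}\,ds$ directly and then factorises; these are the same computation, and your version (together with your remark about regularising or using Plancherel) is in fact a bit more careful about the justification than the paper's treatment.
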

\mbox{}

The integral in (\ref{5.30}) converges, and again gives closed-form expressions:
\begin{align} \label{A62}
\il_0^{\infty}\,x^{-2\pi iv-\frac12(\tau+1)}\,\min\{1,x\}\,dx&=\frac{4}{(3-\tau-4\pi iv)(\tau-1+4\pi iv)}, \\
 \label{A63}
\il_0^{\infty}\,x^{-2\pi iv-\frac12(\tau+1)}\,\frac{x}{1+x}\,dx&= \frac{\pi}{\sin(\tfrac12(\tau-1)+2\pi iv)\,\pi}, \\
\label{A64}
\il_0^{\infty}\,x^{-2\pi iv-\frac12(\tau+1)}\,(1-{\rm e}^{-x})\,dx&=\frac{\Gamma(\tfrac12(3-\tau)-2\pi iv)} {\tfrac12(\tau-1)+2\pi iv}. 
\end{align}

\begin{remark}
Choose again $f(x)=\min\{1,x\}$. Noting that $|J(v)|\leq J(0)=4/(3-\tau)(\tau-1)$, we have
%\beq \label{5.27a}
\beq
\il_{-\infty}^{\infty}\,|J(v)|^k\,dv \asymp \Big(\frac{4}{(3-\tau)(\tau-1)}\Big)^k\frac{1}{\sqrt{k}}. 
\eq
%\eq
Thus we get for even $k$ a similar expression for $C_k(n)$ as 
\eqref{5.27b}, except for an additional factor $\log (\mu n)/\sqrt{k}$. This agrees with the observation in Remark 
\ref{remm1} that for even $k$, $C_k(n)$ grows faster than $A_k(n)$.

\end{remark}

%\renewcommand{\theequation}{A\arabic{equation}}

%\section*{Section A~~Proof details for Section~5} \label{appA}
%\mbox{} \\[-9mm]

\section{Remaining proofs for cliques}  \label{appA}
To reduce notational complexity, we replace the upper integration limits $\infty$ in 
\eqref{5.12} and \eqref{5.21} by $\cutoff^2$, at the expense of relative errors $o(1)$ as $n\to\infty$. \\

\noindent {\bf Proof of Proposition~\ref{prop5.1}.}~~With the substitution
\beq \label{A1}
v_i=\frac{h_i}{\cutoff}\in[w,W]~;~~~~~~w=\frac{1}{\cutoff}\,,~~W=\cutoff, 
\eq
we get for $m=1,...,k-1$
\begin{align} \label{A2}
 I_m\sim \cutoff^{k(1-\tau)}(\tau-1)^k\,
&\begin{array}[t]{c} \underbrace{\il_w^1\cdots\il_w^1} \\ m \end{array}
\begin{array}[t]{c} \underbrace{\il_1^W\cdots\il_1^W} \\ k-m\end{array}
\,v_1^{-\tau}\cdots v_m^{-\tau}v_{m+1}^{-\tau}\cdots v_k^{-\tau} \nonumber\\
&\times\prod_{1\leq i<j\leq k}\,f(v_iv_j)\,dv_1\cdots dv_mdv_{m+1}\cdots dv_k. 
\end{align}
We have for $w\leq v_1,...,v_m\leq 1\leq v_{m+1},...,v_k\leq W$
\beq \label{A3}
\prod_{1\leq i<j\leq k}\,f(v_iv_j)=\Bigl(\prod_{i=1}^m\,v_i^{m-1}\Bigr)\,\prod_{i\leq i\leq m<j}\,\min\{1,v_iv_j\}
\eq
since $f(x)=\min\{1,x\}$. Therefore
\begin{align}\label{A4}
& \mbox{}  v_1^{-\tau}\cdots v_m^{-\tau}v_{m+1}^{-\tau}\cdots v_k^{-\tau}\,\prod_{1\leq i<j\leq k}\,f(v_iv_j) \nonumber \\
&  =~\Bigl(\prod_{i=1}^m\,v_i^{m-1-\tau}\Bigr)\,\prod_{j=m+1}^k\,\Bigl(v_j^{-\tau}\,\prod_{i=1}^m\,\min \{1,v_iv_j\}\Bigr). 
\end{align}
As a consequence, the integral in (\ref{A2}) over $v_{m+1},...,v_k$ factorizes, and we get
\beq \label{A5}
I_m\sim \cutoff^{k(1-\tau)}(\tau-1)^k\,\il_w^1\cdots\il_w^1\,\Bigl(\prod_{i=1}^m\,v_i^{m-1-\tau}\Bigr) \,F_m^{k-m}(v_1,...,v_m)\,dv_1\cdots dv_m, 
\eq
where
\beq \label{A6}
F_m(v_1,...,v_m)=\il_1^W\,v^{-\tau}\,\prod_{i=1}^m\,\min\{1,v_iv\}\,dv. 
\eq
We now observe that $F_m$ in (\ref{A6}) is a symmetric function of $v_1,...,v_m$. Thus we shall evaluate (\ref{A6}) for increasingly ordered $v_i$.

We have for $w\leq v_1\leq\cdots\leq v_m\leq1$ (so that $1/v_m\leq\cdots\leq 1/v_1\leq 1/w=W$), where we assume $m\geq3$,
\begin{eqnarray} \label{A9}
& \mbox{} & \hspace*{-2mm}F_m(v_1,...,v_m) = \il_1^W\,v^{-\tau}\,\min\{1,v_1v\}\cdots\min\{ 1,v_mv\}\,dv \nonumber \\
& & \hspace*{-2mm}=~ \il_1^{1/v_m}\,v^{-\tau}v_1v\cdots v_mv\,dv+\il_{1/v_m}^{1/v_{m-1}}\,v^{-\tau}v_1v\cdots v_{m-1}v\,dv \nonumber \\
& & \hspace*{5mm}+\cdots +\il_{1/v_2}^{1/v_1}\,v^{-\tau}v_1v\,dv+\il_{1/v_1}^W\,v^{-\tau}\,dv \nonumber \\
& & \hspace*{-2mm}=~\frac{v_1\cdots v_m}{m-\tau+1}\,\Bigl(\Bigl(\frac{1}{v_m}\Bigr)^{m-\tau+1}-1\Bigr)+ \frac{v_1\cdots v_{m-1}}{m-\tau}\,\Bigl(\Big(\frac{1}{v_{m-1}}\Bigr)^{m-\tau}-\Bigl(\frac{1}{v_m} \Bigr)^{m-\tau}\Bigr) \nonumber \\
& & \hspace*{5mm}+\cdots +\frac{v_1}{2-\tau}\,\Bigl(\Bigl(\frac{1}{v_1}\Bigr)^{2-\tau}-\Bigl(\frac{1}{v_2}\Bigr)^{2-\tau}\Bigr) + \frac{1}{1-\tau}\,\Bigl(W^{1-\tau}-\Bigl(\frac{1}{v_1}\Bigr)^{1-\tau}\Bigr). 
\mbox{}
\end{eqnarray}
Letting $W\pr\infty$, so that $W^{1-\tau}\pr0$ since $2<\tau<3$, and using
\beq \label{A10}
\frac{1}{j-\tau+1}-\frac{1}{j-\tau}=\frac{-1}{(j-\tau+1)(j-\tau)}, 
\eq
we then get upon some further rewriting
\begin{eqnarray} \label{A11}
& \mbox{} & F_m(v_1,...,v_m) \sim v_1v_2^{\tau-2}\,\Bigl(\frac{1}{(3-\tau)(\tau-2)}-\frac{(v_1/v_2)^{\tau-2}}{(\tau-2)(\tau-1)} \nonumber \\
& & -~\sum_{j=3}^m\,\frac{(v_2/v_j)^{3-\tau}}{(j+1-\tau)(j-\tau)}\:\frac{v_3\cdots v_{j-1}}{v_j^{j-3}}- \frac{v_2^{3-\tau}}{m-\tau+1}\,v_3\cdots v_m\Bigr).  
\end{eqnarray}
For $m=1,2\,$, we get
\beq \label{A12}
F_1(v_1)\sim v_1\Bigl(\frac{1}{\tau-2}-\frac{v_1^{\tau-2}}{(\tau-1)(\tau-2)}\Bigr), 
\eq
\beq \label{A13}
F_2(v_1,v_2)\sim v_1v_2^{\tau-2}\Bigl(\frac{1}{(3-\tau)(\tau-2)}- \frac{(v_1/v_2)^{\tau-2}}{(\tau-2)(\tau-1)}-\frac{v_2^{3-\tau}}{3-\tau}\Bigr), 
\eq
where it is assumed that $w\leq v_1\leq1$ and $w\leq v_1\leq v_2\leq1$ in the respective cases.

To summarize, we have from (\ref{A5}) and symmetry of $F_m$ in (\ref{A6})
\begin{eqnarray} \label{A16}
& \mbox{} & I_m \sim \cutoff^{k(1-\tau)}(\tau-1)^k\,m! \nonumber \\ 
& & \times~~~\il\cdots\il_{\hspace*{-1.2cm}0<v_1\leq\cdots\leq v_m\leq1}\Bigl(\prod_{i=1}^m\,v_i^{m-1-\tau}\Bigr)\,F_m^{k-m}(v_1,...,v_m)\,dv_1\cdots dv_m, 
\end{eqnarray}
where $F_m$ is given for $m=1,...,k-1$ by (\ref{A11})--(\ref{A13}). Here we have replaced the lower integration limit $w$ of $v_1$ by $0$ at the expense of a relative error $o(1)$ as $n\to\infty$. 

To complete the proof of Proposition~\ref{prop5.1}, we substitute
\beq \label{A18}
t_1=\frac{v_1}{v_2},...,t_{m-1}=\frac{v_{m-1}}{v_m}\,,t_m=v_m\in(0,1]. 
\eq
From
\beq \label{A20}
v_m=t_m,v_{m-1}=t_{m-1}t_m,...,v_1=t_1t_2\cdots t_m, 
\eq
we have
\beq \label{A22}
F_m(v_1,...,v_m)\sim\frac{1}{\tau-1}\,\Phi_m(t_1,...,t_m), 
\eq
with $\Phi_m$ given in \eqref{5.13})--\eqref{5.15}. Furthermore, from (\ref{A20})
\beq \label{A24}\\
{\rm det}\Bigl(\frac{\partial v_r}{\partial t_i}\Bigr)_{i,r=1,...,m}=\prod_{i=1}^m\, t_i^{i-1}. 
\eq
Finally, rewriting the products in (\ref{A16}) using (\ref{A20}), we obtain \eqref{5.12}--\eqref{5.13a}. \\ \\
{\bf Proof of Proposition~\ref{prop5.3}.}~~The cases $m=1,2$ can be dealt with directly using \eqref{5.13}--\eqref{5.14}. We assume now $m=3,4,...,k-1$. \\

(i) We have $\Phi_m(0,...,0)=1$ at once from \eqref{5.15}, and
\begin{align} \label{A26}
\Phi_m(1,...,1) & =  (\tau-1)\,\Bigl(\frac{1}{(3-\tau)(\tau-2)}- \frac{1}{(\tau-1)(\tau-2)} \nonumber \\ 
&  -~\sum_{j=3}^m\,\frac{1}{(j+1-\tau)(j-\tau)}-\frac{1}{m+1-\tau}\Bigr)=1, 
\end{align}
where we have used (\ref{A10}). \\
\mbox{}

(ii) and (iii) We write for $0\leq t_1,...,t_m\leq1$
\beq \label{A27}
\Phi_m(t_1,...,t_m)=t_1t_2^{\tau-1}\cdots t_m^{\tau-1}\,\Psi(t_1,...,t_m), 
\eq
where
\begin{align} \label{A28}
\hspace*{-4mm}\Psi_m(t_1,...,t_m) & =  (\tau-1)\,\Bigl(\frac{1}{(3-\tau)(\tau-2)}- \frac{t_1^{\tau-2}}{(\tau-1)(\tau-2)} \nonumber \\ 
&  -~\sum_{j=3}^m\,\frac{t_2^{3-\tau}t_3^{4-\tau}\cdots t_{j-1}^{j-\tau}}{(j+1-\tau)(j-\tau)}
- \frac{t_2^{3-\tau}t_3^{4-\tau}\cdots t_m^{m+1-\tau}}{m+1-\tau}\Bigr). 
\end{align}
Since $\tau\in(2,3)$, we see from (\ref{A26}) that $\Psi_m(t_1,...,t_m)\geq1$, with equality only when $t_1=\cdots =t_m=1$.

We consider the cases $i=1$ and $i=2,...,m$ separately. We have from (\ref{A27})--(\ref{A28})
\begin{align} \label{A29}
\frac{\partial\Phi_m}{\partial t_1}\,(t_1,...,t_m) & =  t_2^{\tau-1}\cdots t_m^{\tau-1}\, \Psi_m(t_1,...,t_m) \nonumber \\ 
&  +~t_1t_2^{\tau-1}\cdots t_m^{\tau-1}\times ({-}t_1^{\tau-3}) \nonumber \\ 
& \geq  t_2^{\tau-1}\cdots t_m^{\tau-1}(1-t_1^{\tau-2})\geq0, 
\end{align}
with equality if $t_1=\cdots =t_m=1$. Next, let $i=2,...,m\,$. We have for $0\leq t_1,...,t_m\leq1$ from (\ref{A27})--(\ref{A28})
\begin{eqnarray} \label{A30}
& \mbox{} & \frac{\partial \Phi_m}{\partial t_i}\,(t_1,...,t_m)=(\tau-1)t_i^{-1}t_1t_2^{\tau-1}\cdots t_m^{\tau-1}\,\Psi(t_1,...,t_m) \nonumber \\ 
& & \hspace*{5mm}-~(\tau-1)t_1t_2^{\tau-1}\cdots t_m^{\tau-1}\,\Bigl(\sum_{j=i+1}^m\,(i+1-\tau)t_i^{-1}\, \frac{t_2^{3-\tau}\cdots t_{j-1}^{j-\tau}} {(j+1-\tau)(j-\tau)} \nonumber \\ 
& & \hspace*{5cm}+~(i+1-\tau)t_i^{-1}\,\frac{t_2^{3-\tau}\cdots t_m^{m+1-\tau}}{m+1-\tau}\Bigr) \nonumber \\ 
& & \geq~(\tau-1)t_i^{-1}t_1t_2^{\tau-1}\cdots t_m^{\tau-1}\Bigl(1-\sum_{j=i+1}^m \,\frac{i+1-\tau}{(j+1-\tau)(j-\tau)}-\frac{i+1-\tau}{m+1-\tau}\Bigr), \nonumber \\
\mbox{}
\end{eqnarray}
and the final member of (\ref{A30}) equals 0 by (\ref{A10}). There is equality in (\ref{A30}) for $t_1=\cdots t_m=1$. 
(iv) is shown in a similar fashion as (iii).\\ \\
%; since this is not strictly required for the purposes of the present paper, we omit a detailed proof. \\ \\
{\bf Proof of Proposition~\ref{prop5.4}.}~~(i) Let $0\leq t_1,...,t_m\leq1$. Since $0\leq\Phi_m(t_1,...,t_m)\leq1$, we see that $\Phi^{k-m}(t_1,...,t_m)$ decreases in $k$, and so does $J_m$. 
(ii) Let $m=3,4,...,k-1$. Since $0\leq\Phi_m(t_1,...,t_m)\leq1$, we have
\beq \label{A22a}
J_m\leq\il_0^1...\il_0^1\:\prod_{i=1}^m\,t_i^{i(m-\tau)-1}\,dt_1\cdots dt_m=\frac{1}{m!}\,\Bigl(\frac{1}{m-\tau}\Bigr)^m. 
\eq 
\mbox{} \\
{\bf Proof of Theorem~\ref{thm5.5}.}~~We must bound the quantity in $[~~]$ at the right-hand side of \eqref{5.17}. By Proposition~\ref{prop5.4} (i), we have that $J_1$ and $J_2$ are bounded, and so 
\beq \label{A23a}
1+\sum_{m=1}^{k-1}\,\Bigl(\!\ba{c} k \\ m \ea\!\Bigr)\,(\tau-1)^m\,m!\,J_m+\Bigl(\frac{\tau-1}{k-\tau}\Bigr)^k=\sum_{m=3}^{k-1}\,\Bigl(\!\ba{c} k \\ m \ea\!\Bigr)\,(\tau-1)^m\,m!\,J_m+O(k^2). 
\eq 
By Proposition~\ref{prop5.4} (ii), we have
\beq \label{A24a}
\sum_{m=3}^{k-1}\,\Bigl(\!\ba{c} k \\ m \ea\!\Bigr)\,(\tau-1)^m\,m!\,J_m\leq\sum_{m=3}^{k-1}\,t_m\,;~~t_m=\Bigl(\!\ba{c} k \\ m \ea\!\Bigr)\,\Bigl(\frac{\tau-1}{m-\tau}\Bigr)^m. 
\eq 
The ratio $t_m/t_{m+1}$ of two consecutive terms equals
\beq \label{A25a}
\Bigl(1+\frac{1}{m-\tau}\Bigr)^m\:\frac{(m+1-\tau)(m+1)}{(\tau-1)(k-m)}\sim \frac{\e}{\tau-1}~\frac{(m+1-\tau)(m+1)}{k-m}, 
\eq 
and this exceeds 1 from $m\sim m_0:=\sqrt{k(\tau-1)/\e}$ onwards. Thus, the largest terms in $\sum_{m=3}^{k-1}\,t_m$ occur for $m$ of the order $\sqrt{k}$. With $m=O(\sqrt{k})$, we have
\begin{align} \label{A26a}
\Bigl(\!\ba{c} k \\ m \ea\!\Bigr) & =  \frac{k^m}{m!}\,\exp\,\Bigl(\sum_{j=0}^{m-1}\,{\rm log}\Bigl(1-\frac{j}{k}\Bigr)\Bigr) \nonumber \\ 
& =  \frac{k^m}{m!}\,\exp\,\Bigl({-}\,\frac{m(m-1)}{2k}+O\Bigl(\frac{m^3}{k^2}\Bigr)\Bigr)=O\Bigl(\frac{k^m}{m!}\Bigr). 
\end{align}
Then using Stirling's formula, $m!=m^m\,{\rm e}^{-m}\,\sqrt{2\pi m}\,(1+O(1/m))$, we find that
\beq \label{A27a}
\Bigl(\!\ba{c} k \\ m \ea\!\Bigr)\,\Bigl(\frac{\tau-1}{m-\tau}\Bigr)^m=O\Bigl[\Bigl(\frac{k\e(\tau-1)}{m(m-\tau)}\Bigr)^m\,\frac{1}{\sqrt{2\pi m}}\Bigr]. 
\eq 
We aim at a Gaussian approximation of the dominant factor $[k\e(\tau-1)/m(m-\tau)]^m$ at the right-hand side of (\ref{A27a}). We have 
\begin{align} \label{A28a}
\frac{d}{dm}\,{\rm log}\Bigl(\frac{k\e(\tau-1)}{m(m-\tau)}\Bigr)^m & =  {\rm log}\Bigl(\frac{k(\tau-1)}{\e m^2}\Bigr)-{\rm log}\Bigl(1-\frac{\tau}{m}\Bigr)-\frac{\tau}{m-\tau} \nonumber \\ 
& =  {\rm log}\Bigl(\frac{k(\tau-1)}{\e m^2}\Bigr)+O\Bigl(\frac{1}{m^2}\Bigr). 
\end{align}
The leading term at the right-hand side of (\ref{A28a}) vanishes at $m=m_0=\sqrt{k(\tau-1)/\e}$. At $m=m_0$, we evaluate
\beq \label{A29a}
\Bigl(\frac{k\e(\tau-1)}{m(m-\tau)}\Bigr)^m={\rm e}^{2m_0+\tau}\,\Bigl(1+O\Bigl(\frac{1}{m_0}\Bigr)\Bigr), 
\eq
\beq \label{A30a}
\Bigl(\frac{d}{dm}\Bigr)^2\,\Bigl(\frac{k\e(\tau-1)}{m(m-\tau)}\Bigr)^m={-}\,\frac{2}{m_0}+O\Bigl(\frac{1}{m_0^2}\Bigr). 
\eq 
Thus, we find the Gaussian approximation
\beq \label{A31a}
\Bigl(\frac{k\e(\tau-1)}{m(m-\tau)}\Bigr)^m\sim \exp\,\Bigl(2m_0+\tau-\frac{1}{m_0}\,(m-m_0)^2\Bigr)
\eq 
for $m$ near $m_0$ (validity range: $|m-m_0|=o(m_0^{2/3})$). Then, from (\ref{A23a}), (\ref{A24a}), (\ref{A27a}) and (\ref{A31a}), we get
\begin{eqnarray} \label{A32a}
& \mbox{} & 1+\sum_{m=1}^{k-1}\,\Bigl(\!\ba{c} k \\ m \ea\!\Bigr)\,(\tau-1)^m\,m!\,J_m+\Bigl(\frac{\tau-1}{k-\tau}\Bigr)^k \nonumber \\ 
& & =~O\,\Bigl[\frac{1}{\sqrt{m_0}}\,{\rm e}^{2m_0}\,\sum_{m={-}\infty}^{\infty}\,\exp\,\Bigl({-}\,\frac{1}{m_0}\,(m-m_0)^2\Bigr)\Bigr]+O(k^2) \nonumber \\ 
& & =~O({\rm e}^{2m_0}), 
\end{eqnarray}
as required.

\section{Remaining proofs for cycles}  \label{appB}
We replace  the upper integration limits $\infty$ in 
\eqref{5.21} by $\cutoff^2$, as in Section \ref{appA}. \\

\noindent {\bf Proof of Theorem~\ref{thm5.6}.}~~After the basic substitution in (\ref{A1}), we get
\begin{align} \label{A31}
C_k(n)   \sim \  & \cutoff^{k(1-\tau)}(\tau-1)^k\,\frac{1}{2k}\,
\Bigl(\!\ba{c} n \\ k \ea\!\Bigr)\, k! \nonumber \\ 
&  \times~\il_w^W\cdots \il_w^W\,h(v_1,v_2,...,v_k)\,dv_1\cdots dv_k, 
\end{align}
where
\begin{align} \label{A32}
h(v_1,...,v_k) & =  v_1^{-\tau}\cdots v_k^{-\tau}\,f(v_1v_2)\,f(v_2v_3)\cdots f(v_{k-1}v_k)\,f(v_kv_1) \nonumber \\ 
& =  g(v_1v_2)\,g(v_2v_3)\cdots g(v_{k-1}v_k)\,g(v_kv_1), 
\end{align}
with $g(x)=x^{-\tau/2}\,f(x)$. The substitution
\beq \label{A33}
v_i={\rm e}^{t_i}\,,\:\:{-}A\leq t_i\leq A\,;~~~dv_i={\rm e}^{t_i}dt_i\,;~~~A={-}{\rm log}\,w={\rm log}\,W={\rm log}\,\cutoff
\eq
then yields
\begin{eqnarray} \label{A34}
& \mbox{} & C_k(n)\sim \cutoff^{k(1-\tau)}(\tau-1)^k\,\frac{1}{2k}\,
\Bigl(\!\ba{c} n \\ k \ea\!\Bigr)\,k! \nonumber \\ 
& & \times~\il_{-A}^A\cdots\il_{-A}^A\,j(t_1+t_2)j(t_2+t_3)\cdots j(t_{k-1}+t_k)j(t_k+t_1)dt_1\cdots dt_k, \nonumber \\
\mbox{}
\end{eqnarray}
where
\beq \label{A35}
j(u)={\rm e}^{\frac12 u}\,g({\rm e}^u)={\rm e}^{-\frac12(\tau-1)u}\,f({\rm e}^u), ~~~~~~u\in\dR, 
\eq
and Theorem~\ref{thm5.6} follows. \\ \\
{\bf Proof of Theorem~\ref{thm5.7}.}~~The formula \eqref{5.26} for ${\rm det}(\C)$ follows from basic matrix operations with $\C$ in \eqref{5.25}. Hence, $\C$ is non-singular when $k$ is odd. Next, from \eqref{5.5} and (\ref{A35}), we have
\beq \label{A36}
j(u)=O({\rm e}^{\frac12(3-\tau)u})\,,~~u<0~;~~~~~~j(u)=O({\rm e}^{-\frac12(\tau-1)u})\,,~~u>0, 
\eq
and so $j(u)$ has exponential decay as $|u|\pr\infty$. Therefore, $F(\bfu)$ in \eqref{5.23} is absolutely integrable over $\dR^k$, and by the substitution $\bfu=\C\bft$, with ${\rm det}(\C)=2$, we get
\beq \label{A37}
\il_{-A}^A\cdots\il_{-A}^A\,F(\C\bft)\,d\bft=\frac12\,\il\cdots\il_{\hspace*{-1.2cm}R(A)}\,F(\bfu)\,d\bfu
\eq
with integration range $R(A)=\C([{-}A,A]^k)$. By non-singularity of $\C$, there is a $\delta>0$ such that
\beq \label{A38}
R(A)\supset [{-}\delta A,\delta A]^k, ~~~~~~A>0, 
\eq
and so we get
\begin{align} \label{A39}
\lim_{A\pr\infty}\,\il_{-A}^A\cdots\il_{-A}^A\,F(\C\bft)\,d\bft & =  \frac12\,\il_{-\infty}^{\infty} \cdots\il_{-\infty}^{\infty}\,F(\bfu)\,d\bfu \nonumber \\ 
& =  \frac12\left(\,\il_{-\infty}^{\infty}\,j(u)\,du\right)^k, 
\end{align}
where we have used the definition of $F$ in \eqref{5.23}. Finally, by \eqref{5.24} and the substitution $x={\rm e}^u\in(0,\infty)$, we get
\beq \label{A40}
\il_{-\infty}^{\infty}\,j(u)\,du=\il_0^{\infty}\,x^{-\frac12(\tau+1)}\,f(x)\,dx, 
\eq
and this is finite because of \eqref{5.5} and $2<\tau<3$. \\ \\
{\bf Proof of Theorem~\ref{thm5.8}.}~~Let $k$ be even. From the theory of circulant matrices, we have that $\C$ is diagonizable,
\beq \label{A44}
\C=\sum_{m=1}^k\,\lambda_m\,\bfd_m\,\bfd_m^H, 
\eq
where for $m=1,...,k$
\beq \label{A45}
\lambda_m=1+{\rm e}^{2\pi im/k}, ~~~~~~\bfd_m=\Bigl(\frac{1}{\sqrt{k}}\,{\rm e}^{2\pi imr/k}\Bigr)_{r=1,...,k}
\eq
are the eigenvalues and eigenvectors of $\C$. With $k=2j$, we have
\beq \label{A46}
\lambda_j=\lambda_{\frac12 k}=0~;~~~~~~\lambda_m\neq0\,,~~m=1,...,k\,,~~m\neq j. 
\eq
Let
\beq \label{A47}
\bfc=\bfd_j=\frac{1}{\sqrt{k}}\,({-}1,1,...,{-}1,1)^T, ~~~~~~L=\langle \bfc\rangle^{\perp}
\eq
be the eigenvector of $\C$ corresponding to $\lambda_j=0$ and let $L$ be its orthogonal complement. It follows from (\ref{A44}) that $\C$ maps $L$ linearly and injectively onto itself.

For $\bft\in\dR^k$, we write
\beq \label{A48}
\bft=\bfw+a\bfc, ~~~~~~\bfw\in L\,,~~a\in\dR. 
\eq
Then $\C\bft=\C\bfw$, and
\beq \label{A49}
\il_{-A}^A\cdots\il_{-A}^A\,F(\C\bft)\,d\bft=
\ba[t]{c} \dil_a~\dil_{\bfw\in L} \\
\mbox{\footnotesize $\bfw+a\bfc\in[{-}A,A]^k$}
\ea
\,F(\C\bfw)\,d\bfw\,da. 
\eq
Observe that $A\,\sqrt{k}\,\bfc=({-}A,A,...,{-}A,A)^T$ is a corner point of $[{-}A,A]^k$. Let $\eps\in(0,1)$ and assume that $a\in\dR$, $|a|<(1-\eps)\,A\,\sqrt{k}$. Then
\beq \label{A50}
|(a\bfc)_r|<(1-\eps)\,A, ~~~~~~r=1,...,k, 
\eq
and so
\beq \label{A51}
|a|<(1-\eps)\,A\,\sqrt{k}~~\&~~\bfw\in[{-}\eps A,\eps A]^k\Rightarrow \bfw+a\bfc\in[{-}A,A]^k. 
\eq
The function $F(\bfu)$ is absolutely integrable over $\bfu\in\dR^k$ and $\C$ is boundedly invertible on $L$. Therefore, $\int_{\bfw\in L}\,F(\C\bfw)\,d\bfw$ is finite. It follows from (\ref{A51}) that for any $\eps\in (0,1)$
\beq \label{A52}
\lim_{A\pr\infty}\,
\ba[t]{c} \dil_{\bfw\in L} \\
\mbox{\footnotesize $\bfw+a\bfc\in[{-}A,A]^k$}
\ea
\,F(\C\bfw)\,d\bfw=\il_{\bfw\in L}\,F(\C\bfw)\,d\bfw
\eq
uniformly in $a\in\dR$, $|a|<(1-\eps)\,A\,\sqrt{k}$. Therefore, from (\ref{A49}), as $A\pr\infty$
\beq \label{A53}
\il_{-A}^A\cdots\il_{-A}^A\,F(\C\bft)\,d\bft=2A\,\sqrt{k}\,\il_{\bfw\in L}\,F(\C\bfw)\,d\bfw\,(1+o(1)). 
\eq

There remains to be computed $\int_{\bfw \in L}\,F(\C\bfw)\,d\bfw$. The mapping $\C:L\pr L$ is invertible, and we have from (\ref{A44})--\eqref{A45}
\beq \label{A54}
{\rm det}(\C:L\pr L)=\prod_{m=1,m\neq j}^k\,\lambda_m=\prod_{m=1,m\neq j}^k\,(1+{\rm e}^{2\pi im/k})=k. 
\eq
Thus we have
\beq \label{A55}
\il_{\bfw\in L}\,F(\C\bfw)\,d\bfw=\frac1k\,\il_{\bfu\in L}\,F(\bfu)\,d\bfu. 
\eq
We represent the condition $\bfu\in L$, i.e., $\bfu^T\bfc=0$ with $\bfc$ the vector in (\ref{A47}) having unit Euclidean length, as
\beq \label{A56}
\delta(\bfu^T\bfc=0)=\il_{-\infty}^{\infty}\,{\rm e}^{2\pi is\bfu^T\bfc}\,ds, ~~~~~\bfu\in\dR^k. 
\eq
Hence
\beq \label{A57}
\il_{\bfu\in L}\,F(\bfu)\,d\bfu=\il_{-\infty}^{\infty}~\il_{\bfu\in\dR^k}\,{\rm e}^{2\pi is\bfu^T\bfc}\, F(\bfu)\,d\bfu\,ds. 
\eq
By \eqref{5.23} and (\ref{A47}), we have
\beq \label{A58}
{\rm e}^{2\pi is\bfu^T\bfc}\,F(\bfu)=\prod_{r=1}^j\,[{\rm e}^{-2\pi isu_{2r-1}/\sqrt{k}} \,j(u_{2r-1})] [{\rm e}^{2\pi isu_{2r}/\sqrt{k}}\,j(u_{2r})]. 
\eq
Hence, the integral over $\bfu$ in (\ref{A57}) factorizes, and we get
\begin{align} \label{A59}
\il_{\bfu\in L}\,F(\bfu)du & = 
\il_{-\infty}^{\infty}\,\Big[\,\il_{-\infty}^{\infty}\,{\rm e}^{-2\pi isu/\sqrt{k}}\,j(u)du\, \il_{-\infty}^{\infty}\,{\rm e}^{2\pi isu/\sqrt{k}}\,j(u)du\Big]^jds \nonumber \\ 
& =  \il_{-\infty}^{\infty}\,|J(s/\sqrt{k})|^{2j}\,ds=\sqrt{k}\,\il_{-\infty}^{\infty}\, |J(v)|^k\,dv, 
\end{align}
where
\beq \label{A60}
J(v)=\il_{-\infty}^{\infty}\,{\rm e}^{-2\pi iuv}\,j(u)\,du=\il_0^{\infty}\,x^{-2\pi iv-\frac12(\tau+1)}\, f(x)\,dx
\eq
is the Fourier transform of $j$ in \eqref{5.24}.

Returning to \eqref{5.22}, we then get from (\ref{A53}), (\ref{A55}) and (\ref{A59})
\begin{align} \label{A61}
C_k(n) & \sim  \cutoff^{k(1-\tau)}(\tau-1)^k\,\frac{1}{2k}\,
\Bigl(\!\ba{c} n \\ k \ea\!\Bigr)\,k! \nonumber \\ 
&  \times~2A\,\sqrt{k}\cdot\frac1k\cdot\sqrt{k}\,\il_{-\infty}^{\infty}\,|J(v)|^k\,dv, 
\end{align}
and this yields Theorem~\ref{thm5.8} since $A={\rm log}\,\cutoff$.

\subsection*{Acknowledgement}
This work is supported by NWO Gravitation Networks grant 024.002.003. 
%The work of JvL is further supported by an NWO TOP-GO grant and by an ERC Starting Grant.
\bibliographystyle{abbrv}   
\bibliography{references2}

\end{document}